\newtheorem{theorem}{Theorem}
\newtheorem{lemma}{Lemma}
\newtheorem{corollary}{Corollary}
\newtheorem{conjecture}{Conjecture}
\newtheorem{proposition}{Proposition}
\theoremstyle{definition}
\newtheorem{remark}{{\rm\bf Remark}}[theorem]
\begin{document}

\title{Congruences arising from Ap\'ery-type series for zeta values}

\author{Kh.~Hessami Pilehrood}
\address{\begin{flushleft}
Department of Mathematics and Statistics \\ Dalhousie University \\ Halifax, Nova Scotia, B3H 3J5, Canada
\end{flushleft}}






\author{T.~Hessami Pilehrood}

\email{hessamik@gmail.com,  hessamit@gmail.com}

\subjclass{11B65, 05A10, 11B68, 11A07,  33F10,  11B39,  11M06.}

\date{}

\keywords{Congruence, creative telescoping, WZ pair, finite central binomial sum, multiple harmonic sum, Bernoulli number,  Ap\'ery-like series,
 zeta value}

\begin{abstract}
Recently, R.~Tauraso established finite $p$-analogues of  Ap\'ery's famous series for
$\zeta(2)$ and $\zeta(3).$
In this paper, we present several congruences for finite central binomial sums
arising from the truncation of Ap\'ery-type series for $\zeta(4)$ and $\zeta(5).$
We also prove a $p$-analogue of Zeilberger's series for $\zeta(2)$ confirming a conjecture
of Z.~W.~Sun.

\end{abstract}

\maketitle

\section{Introduction}
\label{intro}

The Riemann zeta function for ${\rm Re}\,s>1$ is defined by the series
$\zeta(s)=\sum_{n=1}^{\infty}1/n^s.$
It is well-known,  due to Euler, that the value of the
Riemann zeta function at an even positive integer can be expressed in terms of $\pi.$
Namely,
$$
\zeta(2m)=(-1)^{m-1} (2\pi)^{2m} \frac{B_{2m}}{2 (2m)!}, \qquad\qquad m\in {\mathbb N},
$$
where $B_{m}\in {\mathbb Q}$ are   Bernoulli numbers defined by the series expansion
$$
\frac{t}{e^t-1}=\sum_{m=0}^{\infty}B_m \frac{t^m}{m!},
$$
which yields  $B_0=1,$ $B_1=-1/2,$ $B_2=1/6,$ and $B_{2m+1}=0,$ $m\in{\mathbb N}.$

The alternating multiple harmonic sum  is defined as
$$
H_n(a_1, a_2, \dots, a_r)=\sum_{1\le k_1<k_2<\dots<k_r\le n}\prod_{i=1}^r
\frac{{\rm sgn}(a_i)^{k_i}}{k_i^{|a_i|}},
$$
where $n\ge 0,$ $r\ge 1$ and $(a_1, a_2, \ldots, a_r)\in({\mathbb Z}^{*})^r$ (here and in the sequel an empty
sum is treated as zero).

Recently, R.~Tauraso \cite{t1} showed that  Ap\'ery's famous series for $\zeta(3)$ and $\zeta(2),$
\begin{equation}
\zeta(3)=\frac{5}{2}\sum_{k=1}^{\infty}\frac{(-1)^{k-1}}{k^3\binom{2k}{k}}\qquad\text{and}\qquad
\zeta(2)=3\sum_{k=1}^{\infty}\frac{1}{k^2\binom{2k}{k}},
\label{eq02}
\end{equation}
that were used in his irrationality proofs \cite{po} of these numbers admit very nice $p$-analogues:
\begin{equation}
\frac{5}{2}\sum_{k=1}^{p-1}\frac{(-1)^{k-1}}{k^3\binom{2k}{k}}\equiv \frac{H_{p-1}(1)}{p^2} \quad
\text{and} \quad
3\sum_{k=1}^{p-1}\frac{1}{k^2\binom{2k}{k}}\equiv \frac{H_{p-1}(1)}{p}\pmod{p^3},
\label{eq02.5}
\end{equation}
where $p>5$ is a prime.
The similar Ap\'ery-type series for $\zeta(4)$ is also well-known (see \cite[p.\ 89]{com}):
\begin{equation}
\zeta(4)=\frac{36}{17}\sum_{k=1}^{\infty}\frac{1}{k^4\binom{2k}{k}}.
\label{eq03}
\end{equation}
A generalization of formulas (\ref{eq02}), (\ref{eq03}) to odd zeta values $\zeta(2n+3),$ $n\in{\mathbb N},$ with
the help of the generating function identity
\begin{equation}
\sum_{n=0}^{\infty}\zeta(2n+3)a^{2n}=\sum_{k=1}^{\infty}\frac{1}{k(k^2-a^2)}=
\frac{1}{2}\sum_{k=1}^{\infty}\frac{(-1)^{k-1}}{k^3\binom{2k}{k}}\,\,\frac{5k^2-a^2}%
{k^2-a^2}\,\prod_{m=1}^{k-1}\left(1-\frac{a^2}{m^2}\right),
\label{eq04}
\end{equation}
where $a\in{\mathbb C},$ $|a|<1,$ was given by Koecher \cite{ko} (and independently in an expanded form by
Leshchiner \cite{le}).
Expanding the right-hand side of (\ref{eq04}) in powers of $a^2$ and
comparing coefficients of $a^{2n}$ on both sides of (\ref{eq04}) gives the
Ap\'ery-like series for $\zeta(2n+3).$
In particular, comparing constant terms   recovers  formula (\ref{eq02}) for $\zeta(3)$
and comparing coefficients of $a^2$  gives the
following  formula for $\zeta(5):$
\begin{equation}
\zeta(5)=2\sum_{k=1}^{\infty}\frac{(-1)^{k-1}}{k^5\binom{2k}{k}}-\frac{5}{2}\sum_{k=1}^{\infty}
\frac{(-1)^{k-1}H_{k-1}(2)}{k^3\binom{2k}{k}}.
\label{zeta5}
\end{equation}
First results related to generating function identities for even zeta values
belong to Leshchiner \cite{le} who proved (in an expanded form)
that for $|a|<1,$
\begin{equation}
\sum_{n=0}^{\infty}\left(1-\frac{1}{2^{2n+1}}\right)\zeta(2n+2)a^{2n}=\sum_{n=1}^{\infty}
\frac{(-1)^{n-1}}{n^2-a^2}=\frac{1}{2}\sum_{k=1}^{\infty}\frac{1}{k^2\binom{2k}{k}}\,\,
\frac{3k^2+a^2}{k^2-a^2}\,\prod_{m=1}^{k-1}\left(1-\frac{a^2}{m^2}\right).
\label{eq06}
\end{equation}
Comparing  constant terms on both sides of (\ref{eq06}) implies formula (\ref{eq02}) for
$\zeta(2)$ and comparing coefficients of $a^2$ yields
\begin{equation}
\zeta(4)=\frac{16}{7}\sum_{k=1}^{\infty}\frac{1}{k^4\binom{2k}{k}}
-\frac{12}{7}\sum_{k=1}^{\infty}\frac{H_{k-1}(2)}{k^2\binom{2k}{k}}.
\label{eq07}
\end{equation}
From (\ref{eq03}), (\ref{eq07}) we get easily  the following reduction formula:
$$
\sum_{k=1}^{\infty}\frac{H_{k-1}(2)}{k^2\binom{2k}{k}}=\frac{5}{51}\sum_{k=1}^{\infty}
\frac{1}{k^4\binom{2k}{k}}=\frac{5}{108}\zeta(4).
$$
In 2006,
 D.~Bailey, J.~Borwein and D.~Bradley  \cite{bbb} proved another identity
\begin{equation}
\sum_{n=0}^{\infty}\zeta(2n+2)a^{2n}=
\sum_{k=1}^{\infty}\frac{1}{k^2-a^2}=
3\sum_{k=1}^{\infty}\frac{1}{\binom{2k}{k}(k^2-a^2)}
\prod_{m=1}^{k-1}\left(\frac{m^2-4a^2}{m^2-a^2}\right).
\label{2n2}
\end{equation}
It generates similar Ap\'ery-like series for the numbers $\zeta(2n+2),$
which are not covered by Leshchiner's result (\ref{eq06}). In particular, for
$\zeta(4)$ it gives
\begin{equation}
\zeta(4)=3\sum_{k=1}^{\infty}\frac{1}{k^4\binom{2k}{k}}-
9\sum_{k=1}^{\infty}\frac{H_{k-1}(2)}{k^2\binom{2k}{k}}.
\label{zz4}
\end{equation}
In this paper, we prove $p$-analogues of  Ap\'ery-type series
for $\zeta(4)$ and  $\zeta(5)$  arising from the truncation of the series
(\ref{eq03}), (\ref{zeta5}), (\ref{eq07}) and (\ref{zz4}).
\begin{theorem} \label{t1}
Let $p>3$ be a prime. Then we have
$$
4\sum_{k=1}^{p-1}\frac{1}{k^4\binom{2k}{k}}-3\sum_{k=1}^{p-1}\frac{H_{k-1}(2)}{k^2\binom{2k}{k}}
\equiv \frac{3}{p^3}H_{p-1}(1)-\frac{6}{5}pB_{p-5} \pmod{p^2}.
$$
\end{theorem}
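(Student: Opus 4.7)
The left-hand side of the theorem is, up to the factor $1/2$, the coefficient of $a^{2}$ on the right-hand side of Leshchiner's generating identity~(\ref{eq06}); indeed, formula~(\ref{eq07}) (from which the theorem's left-hand side is a rescaling) was obtained precisely by this coefficient extraction. My plan is therefore to establish a finite, $n$-indexed analogue of~(\ref{eq06}) by creative telescoping (the WZ-method), specialize to $n=p-1$, and read off the $a^{2}$-coefficient modulo $p^{2}$.

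The first step is to produce, via a WZ-certificate, a finite identity of the form
\[
\sum_{k=1}^{n}\frac{(-1)^{k-1}}{k^{2}-a^{2}}\;=\;\frac{1}{2}\sum_{k=1}^{n}\frac{3k^{2}+a^{2}}{k^{2}(k^{2}-a^{2})\binom{2k}{k}}\,\prod_{m=1}^{k-1}\!\left(1-\frac{a^{2}}{m^{2}}\right)\;+\;R_{n}(a),
\]
where $R_{n}(a)$ is an explicit closed-form remainder (a rational function in $a^{2}$ involving $\binom{2n}{n}^{-1}$) that vanishes as $n\to\infty$ for $|a|<1$, so that~(\ref{eq06}) is recovered in the limit. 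Using the expansion
\[
\frac{3k^{2}+a^{2}}{k^{2}-a^{2}}\prod_{m=1}^{k-1}\!\left(1-\tfrac{a^{2}}{m^{2}}\right)=3+\Bigl(\tfrac{4}{k^{2}}-3H_{k-1}(2)\Bigr)a^{2}+O(a^{4}),
\]
extracting $[a^{2}]$ at $n=p-1$ turns this into
\[
\sum_{k=1}^{p-1}\frac{(-1)^{k-1}}{k^{4}}\;=\;\tfrac{1}{2}(4S_{1}-3S_{2})\;+\;[a^{2}]R_{p-1}(a),
\]
where $S_{1},S_{2}$ denote the two finite sums on the left-hand side of the theorem.

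The second step is the $p$-adic reduction modulo $p^{2}$. The alternating sum on the left is handled by splitting into even and odd indices and applying the Glaisher congruence $H_{p-1}(4)\equiv -\tfrac{4p}{5}B_{p-5}\pmod{p^{2}}$, which produces a clean expression in $B_{p-5}$. For the boundary $[a^{2}]R_{p-1}(a)$ one needs Wolstenholme's $\binom{2p}{p}\equiv 2\pmod{p^{3}}$ together with
\[
H_{p-1}(1)\equiv -\tfrac{p^{2}}{3}B_{p-3}\pmod{p^{4}},\qquad H_{p-1}(2)\equiv \tfrac{2p}{3}B_{p-3}\pmod{p^{2}},
\]
combined with the evaluations of the constant and $a^{2}$ coefficients of $\prod_{m=1}^{p-2}(1-a^{2}/m^{2})$, which are $1$ and $-H_{p-1}(2)$ respectively (modulo a correction from the missing $m=p-1$ factor). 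Collecting everything, the $\tfrac{3}{p^{3}}H_{p-1}(1)$ term on the right-hand side of the theorem arises from the boundary piece (a $1/p$ factor from $\binom{2p-2}{p-1}^{-1}$ combining with the Wolstenholme vanishing of $H_{p-1}(1)$ modulo $p^{2}$), while the $-\tfrac{6p}{5}B_{p-5}$ term is produced by the alternating-sum reduction after cancellation of the $pB_{p-3}$ contributions from $H_{p-1}(2)$.

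\textbf{Main obstacle.} The crux is producing the WZ-certificate with a sufficiently tractable closed-form $R_{n}(a)$, and then extracting $[a^{2}]R_{p-1}(a)$ cleanly modulo $p^{2}$. The appearance of the denominator $p^{3}$ in the target already signals that a Wolstenholme-level cancellation must be tracked throughout the boundary computation: without $H_{p-1}(1)\equiv 0\pmod{p^{2}}$ the right-hand side would not even be a $p$-adic integer modulo $p^{2}$. Once the boundary term is under control, the remainder of the proof is a bounded sequence of substitutions of standard Bernoulli–harmonic-sum congruences.
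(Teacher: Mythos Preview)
Your overall strategy---produce a finite WZ identity underlying Leshchiner's~(\ref{eq06}), set $n=p-1$, and reduce modulo $p^{2}$---is exactly the paper's, and your reading of where the combination $4S_{1}-3S_{2}$ comes from is correct. The paper in fact pre-extracts the $a^{2}$-coefficient at the level of the WZ pair, working with
\[
F(n,k)=\frac{(-1)^{n+k}(n-k-1)!\,k!^{2}}{(n+k+1)!}\,H_{k}(2),\qquad
G(n,k)=\frac{2(-1)^{n+k}(n-k)!\,k!^{2}}{(n+k+1)!(n+1)}\Bigl(H_{k}(2)-\tfrac{1}{(n+1)^{2}}\Bigr),
\]
so that summation formula~(\ref{eq18}) yields directly
\[
4\sum_{k=1}^{N}\frac{1}{k^{4}\binom{2k}{k}}-3\sum_{k=1}^{N}\frac{H_{k-1}(2)}{k^{2}\binom{2k}{k}}
=-2\sum_{k=1}^{N}\frac{(-1)^{k}}{k^{4}}
+\sum_{k=1}^{N}\frac{(-1)^{N+k}(N-k)!(k-1)!^{2}\,H_{k-1}(2)}{(N+k)!}.
\]

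The gap is in your description of $R_{n}(a)$. The WZ boundary in~(\ref{eq18}) is $\sum_{k=0}^{N}F(N+1,k)$, a genuine sum over $k$, not a single closed-form term in $\binom{2n}{n}^{-1}\prod_{m\le n-1}(1-a^{2}/m^{2})$; after $[a^{2}]$-extraction it is the second sum just displayed. Your whole second step is built on the single-term form and therefore misses the actual mechanism. At $N=p-1$ each summand has to be expanded via
\[
(-1)^{k}\frac{(p-1-k)!(k-1)!^{2}}{(p-1+k)!}\equiv \frac{1}{pk}+\frac{1}{k^{2}}+\frac{p\,H_{k}(2)}{k}\pmod{p^{2}},
\]
and summing against $H_{k-1}(2)$ produces $\tfrac{1}{p}H_{p-1}(2,1)$, $H_{p-1}(2,2)$, and (after one stuffle) $H_{p-1}(2,3)$, $H_{p-1}(4,1)$, $H_{p-1}(2,2,1)$. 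The delicate $\tfrac{3}{p^{3}}H_{p-1}(1)$ does not come from a lone $\binom{2p-2}{p-1}^{-1}$ contribution as you suggest, but from $\tfrac{1}{p}H_{p-1}(2,1)$ via the congruence $H_{p-1}(2,1)\equiv 3H_{p-1}(1)/p^{2}+\tfrac{7}{10}p^{2}B_{p-5}\pmod{p^{3}}$ of Lemma~\ref{l3}; the $B_{p-5}$ pieces then assemble from this together with $-2H_{p-1}(-4)$ and the depth-two and depth-three evaluations listed. (Incidentally, your sign in $H_{p-1}(4)\equiv -\tfrac{4p}{5}B_{p-5}$ is off; item~(a) gives $+\tfrac{4p}{5}B_{p-5}$.)
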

\begin{theorem} \label{t2}
Let $p>3$ be a prime. Then
\begin{equation*}
\sum_{k=1}^{p-1}\frac{1}{k^4\binom{2k}{k}}  \equiv
\frac{H_{p-1}(1)}{p^3} \pmod{p}
\end{equation*}
and
\begin{equation*}
\sum_{k=1}^{p-1}\frac{\binom{2k}{k}}{k^3}  \equiv
\frac{-2H_{p-1}(1)}{p^2} \pmod{p}.
\end{equation*}
\end{theorem}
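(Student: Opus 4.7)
The plan is to prove the two congruences of Theorem~\ref{t2} in tandem, taking advantage of the duality $k\mapsto p-k$ that links them. For the second congruence, Kummer's theorem shows that $\binom{2k}{k}$ has $p$-adic valuation exactly $1$ for $(p+1)/2\le k\le p-1$, so the high-$k$ tail of $\sum_{k=1}^{p-1}\binom{2k}{k}/k^3$ vanishes modulo $p$, reducing the claim to
$$\sum_{k=1}^{(p-1)/2}\frac{\binom{2k}{k}}{k^3} \equiv -\frac{2H_{p-1}(1)}{p^2}\pmod p.$$
For the first congruence, I split the range of summation at $(p-1)/2$: the head $\sum_{k=1}^{(p-1)/2}1/(k^4\binom{2k}{k})$ lies in $\mathbb{Z}_p$ and contributes only to the mod-$p^2$ correction, whereas the tail $\sum_{j=1}^{(p-1)/2}1/((p-j)^4\binom{2(p-j)}{p-j})$ (with $j=p-k$) carries the singular $p^{-1}$ part, since each $\binom{2(p-j)}{p-j}$ is divisible by $p$ exactly once.

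The main new ingredient is a Wolstenholme-style refinement for $\binom{2(p-j)}{p-j}$ modulo $p^2$. Writing
$$\binom{2(p-j)}{p-j} = \binom{2p}{p}\cdot\frac{p\,\bigl((p-j+1)\cdots(p-1)\bigr)^2}{2\,\bigl((2p-2j+1)\cdots(2p-1)\bigr)},$$
replacing each factor $(p-s)=-s(1-p/s)$ and $(2p-s)=-s(1-2p/s)$, expanding $\prod_s(1-\alpha p/s)\equiv 1-\alpha p\sum 1/s\pmod{p^2}$, and inserting $\binom{2p}{p}\equiv 2\pmod{p^3}$, I obtain
$$\binom{2(p-j)}{p-j}\equiv-\frac{2p}{j\binom{2j}{j}}\Bigl(1-2p\bigl(H_{2j-1}(1)-H_{j-1}(1)\bigr)\Bigr)\pmod{p^2}.$$
Taking reciprocals, expanding $(p-j)^{-4}\equiv j^{-4}(1+4p/j)\pmod{p^2}$, and summing, the tail of $p\sum_{k=1}^{p-1}1/(k^4\binom{2k}{k})$ reduces to $-\tfrac12\sum_{j=1}^{(p-1)/2}\binom{2j}{j}/j^3$ modulo $p$, and more precisely to an explicit combination modulo $p^2$ that also involves $\sum\binom{2j}{j}/j^4$ and a harmonic-weighted variant.

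Theorem~\ref{t1} reduced modulo $p$ (using $pB_{p-5}\equiv 0\pmod p$, valid by von Staudt--Clausen for $p>5$ and by direct inspection for $p=5$) gives
$$4\sum_{k=1}^{p-1}\frac{1}{k^4\binom{2k}{k}} - 3\sum_{k=1}^{p-1}\frac{H_{k-1}(2)}{k^2\binom{2k}{k}} \equiv \frac{3H_{p-1}(1)}{p^3}\pmod p.$$
Combined with the duality identity of the previous paragraph, and after applying the same $k\mapsto p-k$ reduction to $\sum_{k=1}^{p-1}H_{k-1}(2)/(k^2\binom{2k}{k})$ to express it in terms of mod-$p$-tractable finite sums, this produces a linear system modulo $p$ from which both congruences of Theorem~\ref{t2} emerge simultaneously.

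The principal obstacle is the fine bookkeeping at level $p^2$: the first statement, once multiplied by $p$, is really a mod-$p^2$ congruence, so the subleading terms $-2p(H_{2j-1}(1)-H_{j-1}(1))$ in the duality congruence must be carried through and shown to cancel cleanly against the auxiliary sums coming from Theorem~\ref{t1}. This delicate cancellation will rest on Wolstenholme's theorem, on Glaisher's congruence $H_{p-1}(1)\equiv-(p^2/3)B_{p-3}\pmod{p^4}$, and on the elementary identity $H_{p-1}(2)\equiv 0\pmod p$.
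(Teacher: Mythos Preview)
Your plan has a genuine gap. The paper's proof is much shorter: the first congruence follows immediately from Theorem~\ref{t1} combined with Lemma~\ref{l1}, which evaluates
\[
p\sum_{k=1}^{p-1}\frac{H_{k-1}(2)}{k^2\binom{2k}{k}}\equiv\frac{1}{3}\,\frac{H_{p-1}(1)}{p^2}\pmod{p^2},
\]
and the second congruence then follows from the first via the one-line duality $\dfrac{p}{k\binom{2k}{k}}\equiv\dfrac12\binom{2(p-k)}{p-k}\pmod p$, which is exactly the identity you rediscover.

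Your plan omits Lemma~\ref{l1} and tries to close a linear system from Theorem~\ref{t1} plus duality alone. But the $k\mapsto p-k$ reduction applied to $B:=\sum_{k=1}^{p-1}H_{k-1}(2)/(k^2\binom{2k}{k})$ yields
\[
pB\equiv\frac12\sum_{j=1}^{(p-1)/2}\frac{H_j(2)\binom{2j}{j}}{j}\pmod p,
\]
which introduces a \emph{new} unknown rather than a known value; your assertion that this is ``mod-$p$-tractable'' is precisely the content of Lemma~\ref{l1} in dual form, and the proposal gives no argument for it. Counting, you have three relations (Theorem~\ref{t1} and the two duality identities) against four unknowns ($pA$, $pB$, and their two dual central-binomial sums), so the system is underdetermined. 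Worse, since the first congruence is, as you correctly note, a mod-$p^2$ statement for $pA$, you actually need $pB$ modulo $p^2$, i.e.\ Lemma~\ref{l1} in its full stated strength; the mod-$p^2$ refinement of the duality that you sketch only generates further unknown sums (such as $\sum_j\binom{2j}{j}/j^4$ and the harmonic-weighted variant) without supplying additional equations. The missing input is exactly Lemma~\ref{l1}, whose proof rests on Tauraso's Lucas-sequence identity~(\ref{eq11}) together with the truncated Ap\'ery congruence~(\ref{eq02.5}) for $\zeta(2)$, neither of which is reachable through the $k\mapsto p-k$ duality alone.
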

In \cite[Conj.~1.1]{sun1} Z.~W.~Sun conjectured that for each prime $p>5,$
\begin{equation}
\sum_{k=1}^{p-1}\frac{1}{k^4\binom{2k}{k}}  \equiv
\frac{H_{p-1}(1)}{p^3}-\frac{7}{45}pB_{p-5} \pmod{p^2}
\label{eq07.1}
\end{equation}
and for $p>7,$
\begin{equation}
\sum_{k=1}^{p-1}\frac{\binom{2k}{k}}{k^3}  \equiv
\frac{-2H_{p-1}(1)}{p^2}-\frac{13}{27}H_{p-1}(3) \pmod{p^4}.
\label{eq07.2}
\end{equation}
Theorem \ref{t2} confirms (\ref{eq07.1}), (\ref{eq07.2}) modulo a prime. Moreover,
from Theorem \ref{t1} it follows that (\ref{eq07.1}) is equivalent to the following
\begin{conjecture}
Let $p>3$ be a prime. Then
\begin{equation}
\sum_{k=1}^{p-1}\frac{H_{k-1}(2)}{k^2\binom{2k}{k}}\equiv \frac{H_{p-1}(1)}{3p^3}+\frac{26}{135}pB_{p-5}
\pmod{p^2}.
\label{eq07.3}
\end{equation}
\end{conjecture}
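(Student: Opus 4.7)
The plan is to reduce the conjecture to Sun's congruence (\ref{eq07.1}) via Theorem~\ref{t1}, and then to attack (\ref{eq07.1}) modulo $p^2$ by strengthening the creative-telescoping argument that underlies Theorem~\ref{t2}. The reduction is a direct calculation: writing $A=\sum_{k=1}^{p-1}1/(k^4\binom{2k}{k})$ and $B=\sum_{k=1}^{p-1}H_{k-1}(2)/(k^2\binom{2k}{k})$, Theorem~\ref{t1} says $4A-3B\equiv 3H_{p-1}(1)/p^3-(6/5)pB_{p-5}\pmod{p^2}$. If (\ref{eq07.1}) holds, then $4A\equiv 4H_{p-1}(1)/p^3-(28/45)pB_{p-5}\pmod{p^2}$, and subtracting gives $3B\equiv H_{p-1}(1)/p^3+(26/45)pB_{p-5}\pmod{p^2}$, which after division by $3$ is the desired congruence (\ref{eq07.3}). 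Thus the real content is (\ref{eq07.1}).

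To prove (\ref{eq07.1}) modulo $p^2$, I would build on the WZ-pair machinery used in Theorem~\ref{t2}, where the argument yields only mod $p$. The scheme is to locate a hypergeometric certificate $F(n,k)$ whose $k$-sum represents the truncation of the Ap\'ery-like series (\ref{eq03}), together with a WZ companion $G(n,k)$ satisfying $F(n+1,k)-F(n,k)=G(n,k+1)-G(n,k)$, and to evaluate the resulting telescoping identity at $n=p-1$. In the mod $p$ argument the boundary values $G(p-1,p)$ and $G(p-1,1)$ need only be known modulo $p$; to reach mod $p^2$ one must expand each binomial coefficient and each partial harmonic sum to one further order, for instance using finer congruences for $\binom{2(p-k)}{p-k}$ and the reflection identities $H_{p-k}(r)\equiv H_{p-1}(r)+(-1)^{r}H_{k-1}(r)+\cdots\pmod{p^{s}}$. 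The mod-$p^2$ reduction of the boundary contribution is then controlled by the Glaisher--Sun--Zhao-type congruences for $H_{p-1}(4)$, $H_{p-1}(1,3)$ and $H_{p-1}(2,2)$ in terms of $pB_{p-5}$, and one must check that all contributions assemble into $-\frac{7}{45}\,pB_{p-5}$.

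The main obstacle is the extreme sensitivity of the coefficient $-\frac{7}{45}$ to every step of the expansion: the dominant boundary terms are of size $1/p^3$, so a single mis-tracked $p$-adic order produces a spurious $pB_{p-5}$ error. A secondary difficulty is that the standard WZ certificate for (\ref{eq03}) may not be well-suited to separating the $H_{p-1}(1)/p^3$ main term from the Bernoulli correction cleanly; a natural remedy is to apply the telescoping argument in parallel to the companion identity (\ref{zz4}) and then eliminate, since this second Ap\'ery-type representation for $\zeta(4)$ yields an independent linear relation on $A$ and $B$ modulo $p^2$, in principle determining both unknowns simultaneously. Whether that route is genuinely easier, or merely relocates the same delicate cancellations, is the principal unknown of the plan.
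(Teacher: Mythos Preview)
The statement you are trying to prove is labeled a \emph{Conjecture} in the paper, and the paper does not prove it. What the paper does is precisely your first paragraph: it observes that Theorem~\ref{t1} makes (\ref{eq07.3}) equivalent to Sun's conjecture (\ref{eq07.1}), via exactly the linear algebra you wrote out with $A$ and $B$. That reduction is correct and matches the paper. Beyond that, the paper only establishes (\ref{eq07.3}) modulo $p$ (this is Lemma~\ref{l1}) and (\ref{eq07.1}) modulo $p$ (this is Theorem~\ref{t2}); the full mod $p^2$ statements are left open.

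So your second and third paragraphs are not a proof but a research plan for an open problem. As such there is nothing to compare against, and your own caveats are accurate: pushing the WZ/telescoping argument behind Theorem~\ref{t2} one $p$-adic order further requires tracking boundary terms of size $1/p^3$ to precision $p^2$, which in turn demands congruences for the relevant multiple harmonic sums one order deeper than those collected in Section~\ref{s1.5}. Whether the needed depth-three and depth-four congruences (e.g.\ for $H_{p-1}(1,1,2)$, $H_{p-1}(2,2)$, $H_{p-1}(1,3)$ to the next order) are even known is the real question, not the WZ bookkeeping. Your alternative idea of exploiting a second Ap\'ery-type identity such as (\ref{zz4}) to get an independent linear relation is reasonable, but note that the WZ pair used for Theorem~\ref{t1} already encodes a combination of (\ref{eq03}) and (\ref{eq07})/(\ref{zz4}); a genuinely independent relation would require a structurally different certificate, and there is no guarantee one exists with better boundary behavior.
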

In this paper we prove (\ref{eq07.3}) modulo a prime (see Lemma \ref{l1} below).
\begin{corollary}
Let $p>5$ be a prime. Then we have
$$
\sum_{k=1}^{p-1}\frac{1}{k^4\binom{2k}{k}}-3\sum_{k=1}^{p-1}\frac{H_{k-1}(2)}{k^2\binom{2k}{k}}\equiv 0\pmod{p}.
$$
\end{corollary}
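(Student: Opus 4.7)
The corollary should fall out immediately by combining the two preceding theorems, once one notes that the Bernoulli term in Theorem~\ref{t1} vanishes modulo $p$. My plan is as follows.

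First, I would reduce the congruence of Theorem~\ref{t1} from modulus $p^2$ to modulus $p$. The right-hand side is $\frac{3}{p^3}H_{p-1}(1)-\frac{6}{5}pB_{p-5}$. To handle the second term, I invoke the von Staudt--Clausen theorem: the prime $p$ divides the denominator of $B_{p-5}$ only if $p-1 \mid p-5$, i.e.\ $p-1 \mid 4$, so only for $p \in \{2,3,5\}$. Since $p>5$, $B_{p-5}$ is $p$-integral and therefore $pB_{p-5} \equiv 0 \pmod{p}$. This yields
\[
4\sum_{k=1}^{p-1}\frac{1}{k^4\binom{2k}{k}}-3\sum_{k=1}^{p-1}\frac{H_{k-1}(2)}{k^2\binom{2k}{k}}
\equiv \frac{3H_{p-1}(1)}{p^3} \pmod{p}.
\]

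Next, I would multiply the first congruence in Theorem~\ref{t2} by $3$ to obtain
\[
3\sum_{k=1}^{p-1}\frac{1}{k^4\binom{2k}{k}} \equiv \frac{3H_{p-1}(1)}{p^3} \pmod{p},
\]
and then subtract this from the reduced Theorem~\ref{t1}. The $H_{p-1}(1)/p^3$ terms cancel, and what remains on the left is exactly
\[
\sum_{k=1}^{p-1}\frac{1}{k^4\binom{2k}{k}} - 3\sum_{k=1}^{p-1}\frac{H_{k-1}(2)}{k^2\binom{2k}{k}} \equiv 0 \pmod{p},
\]
which is the claimed corollary.

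There is no substantive obstacle here; the only subtlety is the $p$-integrality of $B_{p-5}$, and the hypothesis $p>5$ is precisely what is needed for this. Thus the corollary is a direct consequence of Theorems~\ref{t1} and~\ref{t2}, with no further machinery required.
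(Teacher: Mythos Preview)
Your proposal is correct and follows the paper's intended approach: the corollary is stated without proof in the paper, immediately after Theorems~\ref{t1} and~\ref{t2} (and the discussion of Conjecture~1), precisely because it drops out of those two theorems by the subtraction you perform. The only point worth noting is that the paper also has Lemma~\ref{l1} available, which gives $\sum_{k=1}^{p-1}\frac{H_{k-1}(2)}{k^2\binom{2k}{k}}\equiv \frac{H_{p-1}(1)}{3p^3}\pmod{p}$, so one could equally combine Theorem~\ref{t2} with Lemma~\ref{l1} instead of Theorem~\ref{t1}; since the paper in fact derives the first congruence of Theorem~\ref{t2} from Theorem~\ref{t1} and Lemma~\ref{l1}, these routes are equivalent, and your choice is fine.
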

\begin{theorem} \label{t3}
Let $p>3$ be a prime. Then we have
$$
2\sum_{k=1}^{p-1}\frac{(-1)^{k-1}}{k^5\binom{2k}{k}}-\frac{5}{2}\sum_{k=1}^{p-1}
\frac{(-1)^{k-1} H_{k-1}(2)}{k^3\binom{2k}{k}}\equiv -\frac{6}{5}B_{p-5} \pmod{p}.
$$
\end{theorem}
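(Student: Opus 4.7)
The goal is to derive Theorem \ref{t3} by extracting the coefficient of $a^2$ from a finite $p$-adic analogue of Koecher's generating function identity $(\ref{eq04})$, in direct parallel with how $(\ref{zeta5})$ is extracted from $(\ref{eq04})$ analytically, and generalising Tauraso's treatment of the $a^0$ coefficient in $(\ref{eq02.5})$.

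First, I would establish a finite rational-function identity
\begin{equation*}
\sum_{k=1}^{p-1}\frac{1}{k(k^2-a^2)} = \frac{1}{2}\sum_{k=1}^{p-1}\frac{(-1)^{k-1}}{k^3\binom{2k}{k}}\cdot\frac{5k^2-a^2}{k^2-a^2}\prod_{m=1}^{k-1}\left(1-\frac{a^2}{m^2}\right) + T(a),
\end{equation*}
derived from a WZ pair $(F(n,k),G(n,k))$ in $a$ whose creative telescoping yields $(\ref{eq04})$. Here $T(a)$ is the boundary remainder produced by truncating the double summation at $n=p-1$.

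Next, I would expand both sides as formal power series in $a^2$, using the elementary identities
$$\frac{5k^2-a^2}{k^2-a^2}=5+\frac{4a^2}{k^2}+O(a^4),\qquad \prod_{m=1}^{k-1}\left(1-\frac{a^2}{m^2}\right)=1-a^2H_{k-1}(2)+O(a^4),$$
and extract the coefficient of $a^2$. On the Koecher side this coefficient is precisely the left-hand side of Theorem \ref{t3}, while on the partial-fraction side it is $H_{p-1}(5)$, which vanishes modulo $p^2$ by a standard Glaisher-type congruence for odd-weight multiple harmonic sums. Hence modulo $p$,
$$2\sum_{k=1}^{p-1}\frac{(-1)^{k-1}}{k^5\binom{2k}{k}} - \frac{5}{2}\sum_{k=1}^{p-1}\frac{(-1)^{k-1} H_{k-1}(2)}{k^3\binom{2k}{k}} \equiv -[a^2]\,T(a) \pmod{p}.$$

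The concluding step is to evaluate $[a^2]\,T(a)$ modulo $p$. In Tauraso's argument for $(\ref{eq02.5})$, the analogous computation identifies $-[a^0]\,T(a)$ with $H_{p-1}(1)/p^2$; by strict analogy here, extracting the $a^2$-coefficient of the same boundary term should yield $-[a^2]\,T(a)\equiv H_{p-1}(3)/p^2 \pmod{p}$. The Glaisher congruence $H_{p-1}(3)\equiv -\frac{6}{5}p^2 B_{p-5} \pmod{p^3}$ then converts this to $-\frac{6}{5}B_{p-5} \pmod p$, giving the theorem.

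The main technical obstacle will be the explicit $p$-adic evaluation of $[a^2]\,T(a)$: writing down the WZ-certificate in $a$ underlying $(\ref{eq04})$, extracting the boundary contribution at $n=p-1$, expanding to second order in $a^2$, and reducing modulo $p$ using Wolstenholme's congruence for $\binom{2(p-1)}{p-1}$ together with the Fermat-based factorisation of $\prod_{m=1}^{p-1}(1-a^2/m^2)$ modulo $p$. Once $[a^2]\,T(a)$ is identified with $-H_{p-1}(3)/p^2$ modulo $p$, Glaisher's formula for $H_{p-1}(3)$ completes the proof.
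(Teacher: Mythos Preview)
Your overall strategy is essentially the paper's: the paper uses a WZ pair $(F,G)$ with $H_k(2)$ built in, which is precisely the $a^2$-coefficient of the $a$-parametrised WZ pair underlying Koecher's identity $(\ref{eq04})$. Applying the summation formula and setting $N=p-1$ yields the exact identity
\[
2\sum_{k=1}^{p-1}\frac{(-1)^{k-1}}{k^5\binom{2k}{k}}-\frac{5}{2}\sum_{k=1}^{p-1}\frac{(-1)^{k-1}H_{k-1}(2)}{k^3\binom{2k}{k}}
= H_{p-1}(5)+\frac{1}{2}\sum_{k=1}^{p-1}\frac{(-1)^k(p-1-k)!\,(k-1)!^2\,H_{k-1}(2)}{k\,(p-1+k)!},
\]
so your ``boundary term'' $[a^2]T(a)$ is the last sum, and your route and the paper's coincide once the extraction is performed.

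The gap is your evaluation of that boundary term. The claim that ``by strict analogy'' one gets $-[a^2]T(a)\equiv H_{p-1}(3)/p^2$ is not what the computation actually produces. Expanding $(-1)^k(p-1-k)!\,(k-1)!^2/(p-1+k)!\equiv \frac{1}{pk}+\frac{1}{k^2}\pmod{p}$ (this is equation $(\ref{23})$ in the paper), the boundary sum becomes
\[
\frac{1}{2p}H_{p-1}(2,2)+\frac{1}{2}H_{p-1}(2,3)\pmod{p},
\]
and one then needs the \emph{separate} congruences $H_{p-1}(2,2)\equiv -\tfrac{2}{5}pB_{p-5}$ and $H_{p-1}(2,3)\equiv -2B_{p-5}\pmod{p}$ to obtain $-\tfrac{6}{5}B_{p-5}$. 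That this happens to agree numerically with $H_{p-1}(3)/p^2$ is a coincidence of Bernoulli evaluations, not a structural ``analogy'' with the $a^0$ case; the boundary term is a genuine double harmonic sum, not a single one divided by $p^2$. So your plan works, but the last step must be carried out explicitly as above rather than inferred by pattern-matching.
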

\begin{theorem} \label{t4}
Let $p>5$ be a prime. Then
$$
p\sum_{k=1}^{p-1}\frac{(-1)^{k-1}}{k^5\binom{2k}{k}}\equiv \frac{5(1-L_p^2)}{8p^4}-\frac{5}{4p^3}
({\mathcal L}_1(\varphi^2)+{\mathcal L}_1(\varphi^{-2}))+\frac{3H_{p-1}(1)}{p^3}-\frac{3p}{5}B_{p-5} \pmod{p^2},
$$
and
$$
\sum_{k=1}^{p-1}\frac{(-1)^{k}\binom{2k}{k}}{k^4}\equiv \frac{5(1-L_p^2)}{4p^4}-\frac{5}{2p^3}
({\mathcal L}_1(\varphi^2)+{\mathcal L}_1(\varphi^{-2}))+\frac{6H_{p-1}(1)}{p^3} \pmod{p},
$$
where $L_n$ is the $n$th Lucas number defined by the recurrence $L_0=2,$ $L_1=1,$ $L_n=L_{n-1}+L_{n-2},$ $n>1,$
$\varphi=\frac{1+\sqrt{5}}{2}$ is the golden ratio, and
${\mathcal L}_1(x)=\sum_{k=1}^{p-1}x^k/k$ is the finite $1$-logarithm.
\end{theorem}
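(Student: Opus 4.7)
The right-hand side --- featuring the Lucas number $L_p$, the combination ${\mathcal L}_1(\varphi^2)+{\mathcal L}_1(\varphi^{-2})$, a harmonic term, and a $B_{p-5}$ contribution --- signals that both congruences originate from specializing a finite analog of Koecher's identity (\ref{eq04}) at the two conjugate values $a^2=\varphi^2$ and $a^2=\varphi^{-2}$ (the roots of $x^2-3x+1$), then averaging. The plan is to produce such a parametric finite identity by the WZ method, specialize and sum, and reduce modulo $p^2$ using Wolstenholme's theorem, Theorem \ref{t3}, and standard $p$-adic input.

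The first step is to apply Zeilberger's creative telescoping with $a$ treated as a formal parameter, producing a finite identity of the shape
$$
\sum_{k=1}^{p-1}\frac{(-1)^{k-1}(5k^2-a^2)}{k^3(k^2-a^2)\binom{2k}{k}}\prod_{m=1}^{k-1}\!\left(1-\frac{a^2}{m^2}\right)=2\sum_{k=1}^{p-1}\frac{1}{k(k^2-a^2)}+R_p(a),
$$
where $R_p(a)$ is an explicit boundary remainder that vanishes as $p\to\infty$ to recover (\ref{eq04}); this is the parametric lift of Tauraso's derivation of the $a=0$ congruence (\ref{eq02.5}). Specializing $a^2=\varphi^{\pm2}$ and summing the two identities, partial fractions $\frac{1}{k(k^2-a^2)}=\frac{1}{2a^2}\bigl(\frac{1}{k-a}+\frac{1}{k+a}-\frac{2}{k}\bigr)$ convert the right-hand side into a harmonic contribution plus ${\mathcal L}_1(\varphi^2)+{\mathcal L}_1(\varphi^{-2})$; on the left, the product $\prod_{m=1}^{p-2}(m^2-\varphi^{\pm2})/m^2$ is evaluated via $\prod_{m=0}^{p-1}(x-m)\equiv x^p-x\pmod{p}$ and Wilson's theorem, and combined with the Lucas relation $\varphi^{2p}+\varphi^{-2p}=L_p^2+2$ it produces the factor $(1-L_p^2)/p^4$.

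The target sum is then isolated by splitting $\frac{5k^2-a^2}{k^3(k^2-a^2)}=\frac{5}{k(k^2-a^2)}-\frac{a^2}{k^3(k^2-a^2)}$: the first piece recombines with the $\zeta(3)$-type contribution and cancels, while the second, expanded via $\frac{1}{k^2-a^2}=\frac{1}{k^2}+\frac{a^2}{k^4}+\cdots$ at $a^2=\varphi^{\pm2}$, isolates $p\sum(-1)^{k-1}/(k^5\binom{2k}{k})$ plus an $H_{k-1}(2)$-weighted sum controlled by Theorem \ref{t3}. The $B_{p-5}$ term enters both through $\sum_{k=1}^{p-1}1/k^5\equiv-5B_{p-5}/6\pmod{p}$ and through the Fermat-quotient expansion of $\varphi^p$ modulo $p^2$. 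The second congruence follows from the first by the reflection $k\mapsto p-k$, which exchanges $\binom{2k}{k}^{-1}$ with $\binom{2k}{k}$ modulo $p$ up to an explicit sign and power of two. The chief difficulty is the $p$-adic analysis when $p\equiv\pm2\pmod{5}$: $\varphi$ then lies in the unramified quadratic extension $\mathbb{Z}_p[\sqrt{5}]$, and although the Frobenius-invariant combinations in the statement land in $\mathbb{Z}_p$, controlling the precision of $\varphi^p$ and the boundary terms of the WZ telescoping through this extension demands careful bookkeeping.
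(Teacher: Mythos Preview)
Your plan has a genuine gap, and the paper's route is both different and much shorter.

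In the paper, Theorem~\ref{t4} is a one-line consequence of Theorem~\ref{t3} and Lemma~\ref{l2}. The Lucas and $\mathcal{L}_1$ terms do \emph{not} arise from plugging $a^2=\varphi^{\pm2}$ into a finite Koecher identity; they come from Lemma~\ref{l2}, which evaluates $p\sum_{k}(-1)^kH_{k-1}(2)/(k^3\binom{2k}{k})$ by taking Tauraso's identity~(\ref{eq11}), integrating once to reach~(\ref{eq14}), and then setting $t=-1$. At that specialization the Chebyshev-type sequence becomes $v_k(3)=\varphi^{2k}+\varphi^{-2k}=L_{2k}$, so the sum $\sum_{k=1}^{p-1}v_k(3)/k$ is literally $\mathcal{L}_1(\varphi^2)+\mathcal{L}_1(\varphi^{-2})$, and the boundary term $v_p(3)-3=L_p^2-1$ delivers the other piece. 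Multiplying Theorem~\ref{t3} by $p$ and substituting Lemma~\ref{l2} gives the first congruence; the second follows from the duality $\frac{p}{k\binom{2k}{k}}\equiv\frac12\binom{2(p-k)}{p-k}\pmod p$, exactly as you say.

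Your route, by contrast, tries to specialize the parametric Koecher identity at $a^2=\varphi^{\pm2}$ and then \emph{also} expand $\tfrac{1}{k^2-a^2}$ as a power series in $a^2$. These two moves are incompatible: once $a^2=\varphi^{\pm2}$ is a $p$-adic unit, the geometric expansion $\tfrac{1}{k^2-a^2}=\sum_j a^{2j}/k^{2j+2}$ diverges $p$-adically for every $k$ coprime to $p$, so it cannot be used to ``isolate $p\sum(-1)^{k-1}/(k^5\binom{2k}{k})$''. Moreover, the partial-fraction step produces sums of the shape $\sum_{k=1}^{p-1}\frac{1}{k\pm\varphi}$, which are not the finite polylogarithms $\mathcal{L}_1(\varphi^{\pm2})=\sum_{k=1}^{p-1}\varphi^{\pm2k}/k$ appearing in the statement; you give no mechanism to pass from one to the other. (Also, the auxiliary claim $\sum_{k=1}^{p-1}1/k^5\equiv -\tfrac{5}{6}B_{p-5}\pmod p$ is false: $H_{p-1}(5)\equiv 0\pmod{p^2}$ for $p>7$ by property~(a).) The correct source of the polylogarithms is the Lucas-sequence identity $v_k(3)=\varphi^{2k}+\varphi^{-2k}$, not a specialization of the Koecher parameter.
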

The first congruence of Theorem \ref{t4} gives a finite $p$-analogue of the following
alternating central binomial sum  that was evaluated explicitly in \cite{li}:
$$
\sum_{k=1}^{\infty}\!\frac{(-1)^{k-1}}{k^5\binom{2k}{k}}=\!\!-2\zeta(5)+\frac{5}{2}{\rm Li}_5(\varphi^{-2})
+5 {\rm Li}_4(\varphi^{-2})\log\varphi+4\zeta(3)\log^2\varphi-\frac{8}{3}\zeta(2)\log^3\varphi
+\frac{4}{3}\log^5\varphi,
$$
where ${\rm Li}_n(z)=\sum_{k=1}^{\infty}z^k/k^n$ is the classical polylogarithm.

In \cite{mt}, S.~Mattarei and R.~Tauraso extended results
of \cite{t1} and described a general approach on obtaining  congruences for the finite sums
\begin{equation}
\sum_{k=1}^{p-1}\frac{t^k}{k^d\binom{2k}{k}} \pmod{p^2}, \qquad d=0,1,2,3\footnote{when $d=3$ the congruence
was established only modulo $p$ (see \cite[Cor.~6.4]{mt})},
\label{eq08}
\end{equation}
and
\begin{equation}
p\sum_{k=1}^{p-1}\frac{H_{k-1}(2) t^k}{k^d\binom{2k}{k}} \pmod{p}, \qquad d=0,1,2.
\label{eq09}
\end{equation}
Note that some special cases of sum (\ref{eq09}) were considered earlier by Z.~W.~Sun in \cite{supi}.
The crucial idea of  work \cite{mt} relies on a connection of values (\ref{eq08}), (\ref{eq09}) with
sums of the form
$$
\sum_{k=1}^{p-1}\frac{u_k(2-t)}{k^d} \qquad\text{and}\qquad \sum_{k=1}^{p-1}
\frac{v_k(2-t)}{k^d}
$$
that can be written in terms of the finite polylogarithms (see \cite[\S
8]{mt})
$$
\mathcal{L}_d(x)=\sum_{k=1}^{p-1}\frac{x^k}{k^d}, \qquad d\in {\mathbb N}.
$$
Here $\{u_n(x)\}_{n\ge 0}$ and $\{v_n(x)\}_{n\ge 0}$ are Lucas sequences defined by the
recurrence relations
$$
u_0(x)=0, \quad u_1(x)=1, \quad\text{and}\quad u_n(x)=xu_{n-1}(x)-u_{n-2}(x) \quad\text{for}\quad
n>1,
$$
$$
v_0(x)=2, \quad v_1(x)=x, \quad\text{and}\quad v_n(x)=xv_{n-1}(x)-v_{n-2}(x) \quad\,\text{for}\quad
n>1.
$$
In principle, their method can be generalized to get congruences for sums (\ref{eq08}) with $d\ge 4.$
So, for example, R.~Tauraso communicated us the following identity:
\begin{align*}
\binom{2n}{n}\sum_{k=1}^n\frac{t^k}{k^4\binom{2k}{k}}
&=\sum_{k=1}^n \binom{2n}{n-k}\frac{v_k(t-2)}{k^4}+\binom{2n}{n}
\sum_{k=1}^n\frac{1}{k^4}\\
&+2\sum_{1\leq j<k\leq n} \binom{2n}{n-k}
\left(\frac{1}{k}+\frac{1}{j}\right)\frac{(-1)^{k-j}v_j(t-2)}{jk^2}\\
&+4\sum_{1\leq i< j<k\leq n} \binom{2n}{n-k}
\frac{(-1)^{k-i}v_i(t-2)}{ijk^2}
\end{align*}
that can be obtained by integration from \cite[Th.~5.3]{mt}.
In the easiest case, when $t=4$, then $v_n(t-2)=2$ for $n\geq 0$ and it
follows that (see  \cite[Section 8]{mt})
$$
p\sum_{k=1}^{p-1}\frac{4^k}{k^4\binom{2k}{k}}\equiv -\frac{4}{3}
\left(2q_p(2)^3+B_{p-3}\right)\pmod{p},
$$
where $q_p(2)=\frac{2^{p-1}-1}{p}$ is the Fermat quotient.
For other values of $t,$ in particular, for $t=1$ it is not so easy to derive
similar congruences, since one needs evaluations for values of finite multiple
polylogarithms modulo a power of a prime.
To prove our Theorems \ref{t1}--\ref{t4}, we employ another method based on application
of appropriate WZ pairs that were found in \cite{he1} for demonstrating
identities (\ref{eq04}),  (\ref{eq06}), (\ref{2n2}). The similar approach also allows us to
establish
the following $p$-analogue of Zeilberger's series for
$\zeta(2),$
$$
\zeta(2)=\sum_{k=1}^{\infty}\frac{21k-8}{k^3\binom{2k}{k}^3}.
$$
\begin{theorem} \label{t5}
Let $p$ be a prime greater than $5.$ Then we have
$$
\sum_{k=1}^{p-1}\frac{21k-8}{k^3\binom{2k}{k}^3}+\frac{p-1}{p^3}\equiv
\frac{H_{p-1}(1)}{p^2}(15p-6)+\frac{12}{5}p^2B_{p-5} \pmod{p^3}.
$$
\end{theorem}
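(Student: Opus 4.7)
The plan is to follow the same WZ-pair strategy used for Theorems \ref{t1}--\ref{t4}, but now applied to the WZ pair $(F(n,k),G(n,k))$ from \cite{he1} that underlies Zeilberger's identity $\zeta(2)=\sum_{k\ge 1}(21k-8)/(k^3\binom{2k}{k}^3)$. Write $R(k)=(21k-8)/(k^3\binom{2k}{k}^3)$, so that (up to normalization) $F(0,k)=R(k)$ and $\sum_{k\ge 0}F(n,k)$ is independent of $n$ by the WZ relation $F(n+1,k)-F(n,k)=G(n,k+1)-G(n,k)$.

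First I would sum the WZ relation over the finite rectangle $0\le n\le p-1$, $0\le k\le p-1$, obtaining
$$\sum_{k=0}^{p-1}\bigl(F(p,k)-F(0,k)\bigr)=\sum_{n=0}^{p-1}\bigl(G(n,p)-G(n,0)\bigr).$$
The left-hand side reproduces the target truncation $\sum_{k=1}^{p-1}R(k)$ together with a contribution from $F(p,k)$. Using the classical expansions $\binom{2p}{p}\equiv 2\pmod{p^3}$ and $\binom{p+k}{k}\equiv 1+pH_k(1)+\frac{p^2}{2}(H_k(1)^2-H_k(2))\pmod{p^3}$, I would reduce $F(p,k)$ to a combination of finite multiple harmonic sums. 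The extra summand $(p-1)/p^3$ on the left of the claimed congruence should arise from the regularization of the singular term $F(p,0)$.

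Next, the boundary terms $G(n,0)$ and $G(n,p)$ on the right-hand side would be expanded in the same way. The sum $\sum_{n=0}^{p-1}G(n,0)$ typically telescopes or reduces, after rewriting in terms of $H_n(r)$, to a combination producing the factor $H_{p-1}(1)(15p-6)/p^2$ upon applying Wolstenholme's congruence $H_{p-1}(1)\equiv -\tfrac{p^2}{3}B_{p-3}\pmod{p^4}$. The coefficient $\tfrac{12}{5}p^2B_{p-5}$ must then come from a weight-$5$ residual traced via the Emma Lehmer identity $H_{p-1}(4)\equiv -\tfrac{4}{5}pB_{p-5}\pmod{p^2}$, so the arithmetic check amounts to verifying that the rational combination of $H_{p-1}(4)$-type terms emerging from the boundary expansion carries the predicted coefficient $\tfrac{12}{5}$.

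The principal obstacle is handling the triple binomial $\binom{2k}{k}^3$ in the denominator near the critical indices $k\approx p-1$ and $k\approx p/2$, where the $p$-adic valuations of individual factors shift. The WZ certificate from \cite{he1} should contain polynomial compensators cancelling the apparent poles, but tracking each expansion to modulus $p^3$ requires three-digit $p$-adic precision on all three $\binom{2k}{k}$ factors and their cross-products simultaneously. A secondary technical point is that the previous theorems worked only modulo $p^2$ or $p$, whereas here we must work modulo $p^3$; this will force the use of sharper identities for multiple harmonic sums such as $H_{p-1}(2,1)\equiv 0\pmod{p}$ and refined expansions of $\binom{2p}{p}$ beyond Wolstenholme in the simplification step.
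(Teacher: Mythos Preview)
Your plan has a genuine structural gap. The paper does start from a WZ pair, but it uses the \emph{diagonal} summation formula (18), not a rectangle sum. With the pair
\[
F(n,k)=\frac{k!^4 n!^2 (2n+3k+3)}{2(2k+1)!(k+n+1)!^2},\qquad
G(n,k)=\frac{k!^4 n!^2}{(2k)!(k+n+1)!^2},
\]
formula (18) yields the exact finite identity
\[
\sum_{k=1}^{N}\frac{1}{k^2}=\sum_{k=1}^{N}\frac{21k-8}{k^3\binom{2k}{k}^3}
-\sum_{k=1}^{N}\frac{k!(k-1)!^3 N!^2 (2N+3k)}{(2k)!(k+N)!^2}.
\]
Here the Zeilberger summand $R(k)$ arises from the \emph{diagonal} values $G(n,n)+F(n+1,n)$, not from $F(0,k)$; your assumption $F(0,k)=R(k)$ is not how this particular pair behaves. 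Crucially, the boundary term already contains only a \emph{single} $\binom{2k}{k}$, so your worry about triple binomials near $k\approx p/2$ evaporates --- the cube has been eliminated by the identity itself before any $p$-adic expansion.

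The deeper gap is what happens after setting $N=p-1$. The boundary term becomes
\[
\frac{1}{p^2}\sum_{k=1}^{p-1}\frac{2p+3k-2}{k\binom{2k}{k}}\prod_{j=1}^{k-1}\Bigl(1+\frac{p}{j}\Bigr)^{-2},
\]
and expanding the product introduces the coefficients $b_{m,k-1}$ of Lemma~\ref{l4}. This does \emph{not} ``typically telescope or reduce'': one is left with sums of the shape $\sum_{k=1}^{p-1}\frac{(3k-2)b_{m,k-1}+2b_{m-1,k-1}}{k\binom{2k}{k}}$ for $m=1,\dots,5$. The paper needs two substantial auxiliary results to handle them: Proposition~\ref{p1} (a purely combinatorial recursion on the $b_{m,k}$ converting these into sums $\sum_k\frac{3k b_{m-2,k}+2b_{m-3,k}}{k^3\binom{2k}{k}}$) and Proposition~\ref{p2}, whose proof requires a \emph{second} parametric WZ pair depending on a free variable $a$, followed by comparison of coefficients of $a^m$ and a long evaluation of alternating and non-alternating multiple harmonic sums via (a)--(g). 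Your outline contains no mechanism to generate or evaluate these intermediate central-binomial sums; without an equivalent of Propositions~\ref{p1} and \ref{p2} (and the congruences $b_{m,p-1}\equiv 0$ of Lemma~\ref{l4}) the boundary term cannot be controlled to the required modulus $p^6$ before division by $p^3$.
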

Note that Theorem \ref{t5} confirms Z.~W.~Sun's conjecture \cite[(1.19)]{sun1}.

The paper is organized as follows. In Sections \ref{s1.5} and \ref{s2}, we recall some important divisibility
properties of multiple harmonic sums and
 prove some helpful lemmas.
In Section \ref{s3}, we give proofs of Theorems \ref{t1}--\ref{t4}.
In final Section \ref{s4},  we  demonstrate some interesting combinatorial identities and interrelated congruences
that are essential for the proof of Theorem \ref{t5}.

\section{Multiple harmonic sums} \label{s1.5}

We start by recalling some important properties of multiple harmonic sums.
Let $a,b,c$ be positive integers. We will need the following two formulas for the product:
$$
H_n(a)\cdot H_n(b)=H_n(a,b)+H_n(b,a)+H_n(a+b),
$$
$$
H_n(a,b)\cdot H_n(c)=H_n(a,b,c)+H_n(a,c,b)+H_n(c,a,b)+H_n(a+c,b)+H_n(a,b+c).
$$
The divisibility properties of multiple harmonic sums were studied in \cite{hof,zhs}, \cite{t1}--\cite{zhou}
and many of them are related to the Bernoulli numbers:

\vspace{0.3cm}

(a) (\cite[Th.~5.1, Cor.~5.1]{zhs}) for   $a>0$ and for any prime $p\ge a+3,$
\begin{equation*}
H_{p-1}(a)\equiv\begin{cases}
-\frac{a(a+1)}{2(a+2)}\,p^2 B_{p-a-2} \pmod{p^3}     & \quad \text{if} \quad a \quad \text{is odd}, \\[3pt]
\frac{a}{a+1}\,pB_{p-a-1} \qquad\, \pmod{p^2} & \quad\text{if} \quad a \quad \text{is even};
\end{cases}
\end{equation*}

\vspace{0.3cm}

(b) (\cite[Rem.~5.1]{zhs}, \cite[Th.~2.1]{t1}) for any prime $p>5,$
\begin{equation*}
\begin{array}{llll}
H_{p-1}(1)&\equiv &p^2\left(\displaystyle 2\frac{B_{p-3}}{p-3}-\frac{B_{2p-4}}{2p-4}\right) &\pmod{p^4}, \\[12pt]
H_{p-1}(2)&\equiv &\displaystyle-\frac{2H_{p-1}(1)}{p}+\frac{2}{5}p^3B_{p-5} &\pmod{p^4}, \\[10pt]
H_{p-1}(1)&\equiv & p^2\left(\displaystyle\frac{B_{3p-5}}{3p-5}-3\frac{B_{2p-4}}{2p-4}+3\frac{B_{p-3}}{p-3}\right)+p^4
\displaystyle\frac{B_{p-5}}{p-5} &\pmod{p^5}.
\end{array}
\end{equation*}

\vspace{0.3cm}

(c) (\cite{zhou},\cite[Th.~1.6]{zhao}) for $a, r>0$ and for any prime $p>ar+2,$
\begin{equation*}
H_{p-1}(\{a\}^r)\equiv \begin{cases}
(-1)^r\frac{a(ar+1)p^2}{2(ar+2)}B_{p-ar-2} \pmod{p^3}     & \quad \text{if} \quad ar \quad \text{is odd}, \\[3pt]
(-1)^{r-1}\frac{ap}{ar+1}\,B_{p-ar-1} \,\, \pmod{p^2} & \quad\text{if} \quad ar \quad \text{is even};
\end{cases}
\end{equation*}

\vspace{0.3cm}

(d) (\cite[Th.~3.1, 3.2]{zhao}) for $a_1, a_2>0$ and for any prime $p\ge a_1+a_2,$
$$
H_{p-1}(a_1,a_2)\equiv\frac{(-1)^{a_2}}{a_1+a_2}\binom{a_1+a_2}{a_1}B_{p-a_1-a_2} \pmod{p},
$$
moreover, if $a_1+a_2$ is even, then for any prime $p>a_1+a_2+1,$
\begin{equation*}
\begin{split}
H_{p-1}(a_1,a_2)&\equiv p\left[(-1)^{a_1}a_2\binom{a_1+a_2+1}{a_1}-(-1)^{a_1}a_1\binom{a_1+a_2+1}{a_2}-a_1-a_2\right] \\[3pt]
&\times
\frac{B_{p-a_1-a_2-1}}{2(a_1+a_2+1)} \pmod{p^2};
\end{split}
\end{equation*}

\vspace{0.3cm}

(e) (\cite[Th.~3.5]{zhao} if $a_1, a_2, a_3>0$ and $w:=a_1+a_2+a_3$ is odd, then for any prime $p>a_1+a_2+a_3,$
$$
H_{p-1}(a_1,a_2,a_3)\equiv \left[(-1)^{a_1}\binom{w}{a_1}-(-1)^{a_3}\binom{w}{a_3}\right]\frac{B_{p-w}}{2w} \pmod{p};
$$

\vspace{0.3cm}

(f) (\cite[Prop.~3.8]{zhao}, \cite[Th.~5.2, 7.2]{hof}) for any prime $p>5,$
$$
H_{p-1}(1,1,2)\equiv \frac{11}{10}pB_{p-5}, \quad
H_{p-1}(1,2,1)\equiv -\frac{9}{10}pB_{p-5} \pmod{p^2},
$$
$$
H_{p-1}(2,1,1)\equiv \frac{3}{5}pB_{p-5} \!\!\pmod{p^2}, \quad
H_{p-1}(1,1,1,2)\equiv H_{p-1}(1,4)\equiv B_{p-5} \!\pmod{p};
$$

\vspace{0.3cm}

(g) (\cite[Cor.~2.3, Prop.~7.3, 6.1]{zhao1}) for any prime $p>5,$
$$
H_{p-1}(-4)\equiv \frac{3}{4}pB_{p-5}, \quad
H_{p-1}(-3)\equiv -2H_{p-1}(1,-2)\equiv \frac{3}{2}\frac{H_{p-1}(1)}{p^2} \pmod{p^2},
$$
$$
H_{p-1}(2,-2)\equiv -2H_{p-1}(1,-3), \qquad 2H_{p-1}(1,1,-2)\equiv H_{p-1}(1,-3) \pmod{p}.
$$

\vspace{0.3cm}

Note that by (a), (b), we can replace the harmonic number $H_{p-1}(1)$ appearing in the congruences of Theorems \ref{t1}, \ref{t2},
\ref{t4}, \ref{t5}
by an appropriate expression in terms of Bernoulli numbers.

\section{Preliminaries} \label{s2}

\begin{lemma} \label{l1}
Let $p$ be a prime greater than $5.$ Then
\begin{equation}
p\sum_{k=1}^{p-1}\frac{H_{k-1}(2)}{k^2\binom{2k}{k}}\equiv\frac{1}{3}\frac{H_{p-1}(1)}{p^2} \pmod{p^2}.
\label{eq10}
\end{equation}
\end{lemma}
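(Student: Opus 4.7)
The strategy is to employ a WZ-pair argument in the spirit of \cite{he1}, establishing finite $p$-adic polynomial analogues of the two generating-function identities (\ref{eq06}) of Leshchiner and (\ref{2n2}) of Bailey--Borwein--Bradley. Matching the coefficient of $a^2$ in each analogue produces two linear relations modulo $p$ between the sums
$$
A := \sum_{k=1}^{p-1}\frac{1}{k^4\binom{2k}{k}}\quad\text{and}\quad B := \sum_{k=1}^{p-1}\frac{H_{k-1}(2)}{k^2\binom{2k}{k}},
$$
which can be solved for $B \pmod p$; multiplying by $p$ yields the lemma.

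For the Leshchiner side, the WZ pair from \cite{he1} gives a finite congruence of the form
$$
\sum_{k=1}^{p-1}\frac{1}{k^2\binom{2k}{k}}\cdot\frac{3k^2+a^2}{k^2-a^2}\prod_{m=1}^{k-1}\left(1-\frac{a^2}{m^2}\right)\equiv Q(p,a)\pmod{p^3},
$$
whose constant term in $a$ recovers Tauraso's congruence (\ref{eq02.5}) for $\zeta(2)$. Using the Taylor expansions $\frac{3k^2+a^2}{k^2-a^2}=3+\frac{4a^2}{k^2}+O(a^4)$ and $\prod_{m=1}^{k-1}(1-a^2/m^2)=1-a^2H_{k-1}(2)+O(a^4)$, the coefficient of $a^2$ on the left equals $4A-3B$. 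Simplifying the corresponding coefficient of $a^2$ in $Q(p,a)$ modulo $p$ via the congruences of Section~\ref{s1.5} (in particular $H_{p-1}(4)\equiv \tfrac{4}{5}pB_{p-5}\equiv 0\pmod p$) produces the first relation
$$
4A - 3B \equiv \frac{3H_{p-1}(1)}{p^3} \pmod p.
$$

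For the Bailey--Borwein--Bradley side, the analogous WZ pair produces a finite congruence whose left-hand side, expanded via $\prod_{m=1}^{k-1}(m^2-4a^2)/(m^2-a^2)=1-3a^2H_{k-1}(2)+O(a^4)$, has coefficient of $a^2$ equal to $3A-9B=3(A-3B)$; after reducing the right-hand side modulo $p$ using Section~\ref{s1.5}, one obtains a second relation
$$
A - 3B \equiv 0 \pmod p.
$$
Substitution gives $9B \equiv 3H_{p-1}(1)/p^3 \pmod p$, hence $B \equiv H_{p-1}(1)/(3p^3) \pmod p$. Multiplying by $p$ (and noting that $H_{p-1}(1)/p^2$ is $p$-integral by Wolstenholme's theorem, cf.\ property (b) of Section~\ref{s1.5}) yields the congruence of the lemma modulo $p^2$.

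The principal technical difficulty lies in rigorously establishing the two WZ-based finite identities with the required $p$-adic precision. Since $p\mid \binom{2k}{k}$ for $p/2 < k < p$, the summands are not individually $p$-integers, so the telescoping must be carried out while carefully tracking boundary contributions and denominators. Additionally, the Taylor expansions in $a^2$ involve higher multiple harmonic sums (generalizing $H_{k-1}(2)$), and showing that their contributions to the $a^2$ coefficient of the right-hand sides collapse, modulo $p$, to the clean form above requires systematic use of the harmonic-sum congruences collected in Section~\ref{s1.5}.
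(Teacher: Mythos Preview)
Your proposal outlines a plausible strategy but is not a proof: the two finite WZ identities are asserted, not established, and you flag this yourself as ``the principal technical difficulty.'' In particular, your second relation $A-3B\equiv 0\pmod p$ is precisely the paper's Corollary~1, which in the paper is \emph{deduced from} Lemma~\ref{l1} together with Theorem~\ref{t1}. To avoid circularity you would need to derive it independently by carrying out a full finite analogue of the Bailey--Borwein--Bradley WZ computation, evaluating the boundary term at $N=p-1$, and reducing it modulo~$p$; none of that is done here, and it is not automatic. Your first relation is Theorem~\ref{t1} reduced modulo~$p$, whose WZ proof alone already occupies a page of computation in Section~\ref{s3}.

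By contrast, the paper's proof of Lemma~\ref{l1} is short and entirely elementary, avoiding any new WZ machinery. It takes Tauraso's identity
\[
\sum_{k=0}^{n}\frac{\binom{n}{k}\binom{n+k-1}{k}}{\binom{2k}{k}}(-t)^k=\frac{(-1)^n v_n(t-2)}{2}
\]
with $n=p$, uses the factorisation
\[
(-1)^{k-1}\binom{p}{k}\binom{p+k-1}{k}=\frac{p^2}{k^2}\prod_{m=1}^{k-1}\Bigl(1-\frac{p^2}{m^2}\Bigr)\equiv \frac{p^2}{k^2}\bigl(1-p^2H_{k-1}(2)\bigr)\pmod{p^6}
\]
to extract the $H_{k-1}(2)$ sum directly, sets $t=1$, applies Tauraso's known congruence~(\ref{eq02.5}), and finishes with the trivial evaluation $v_p(1)=2\cos(\pi p/3)=1$ for primes $p>3$. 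Your route, if completed, would rederive Theorem~\ref{t1} and simultaneously prove Corollary~1 by an independent method --- interesting, but far more work than necessary for this lemma, and the second half of it remains to be done.
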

\begin{proof}
Note that this congruence modulo $p$ easily follows from Lemma 6.1 of \cite{mt}. Following the same scheme of the proof
and applying (\ref{eq02.5}), it is possible to derive the more exact congruence modulo $p^2.$ We reproduce the proof here
for completeness. In \cite[Th.~3.1]{t1} it was shown that
\begin{equation}
\sum_{k=0}^n\frac{\binom{n}{k}\binom{n+k-1}{k}}{\binom{2k}{k}}(-t)^k=\frac{(-1)^nv_n(t-2)}{2},
\label{eq11}
\end{equation}
where $\{v_n(x)\}_{n\ge 0}$ is the Lucas sequence defined in the Introduction. Taking $n=p,$  where
$p$ is an odd prime,
and noting that $v_n(x)=(-1)^nv_n(-x),$ we get
\begin{equation}
\sum_{k=0}^p\frac{\binom{p}{k}\binom{p+k-1}{k}}{\binom{2k}{k}}(-t)^k=\frac{v_p(2-t)}{2}.
\label{eq12}
\end{equation}
For $1\le k\le p-1,$ we have
\begin{equation}
(-1)^{k-1}\binom{p}{k}\binom{p+k-1}{k}=\frac{p^2}{k^2}\prod_{m=1}^{k-1}\left(1-\frac{p^2}{m^2}\right)
\equiv \frac{p^2}{k^2}\Bigl(1-p^2H_{k-1}(2)\Bigr) \pmod{p^6}.
\label{eq13}
\end{equation}
Now, since $\binom{2k}{k},$ for $p/2<k<p,$ is a multiple of $p$ but not of $p^2,$ from (\ref{eq12}), (\ref{eq13})
we obtain
$$
p^4\sum_{k=1}^{p-1}\frac{t^k H_{k-1}(2)}{k^2\binom{2k}{k}}\equiv
p^2\sum_{k=1}^{p-1}\frac{t^k}{k^2\binom{2k}{k}}+
\frac{v_p(2-t)+t^p-2}{2} \pmod{p^5}.
$$
Setting in the above congruence $t=1$ and taking into account (\ref{eq02.5}) we get for $p>5,$
$$
p\sum_{k=1}^{p-1}\frac{H_{k-1}(2)}{k^2\binom{2k}{k}}\equiv \frac{H_{p-1}(1)}{3p^2}+\frac{v_p(1)-1}{p^3}
\pmod{p^2}.
$$
Now noting that $v_p(1)=\alpha^p+\alpha^{-p},$ where $\alpha$ is a root of the polynomial
$x^2-x+1$ and $p$ is a prime greater than $3,$ i.e., $v_p(1)=2\cos(\pi p/3)$
and $p\equiv \pm 1 \pmod{6},$ we get $v_p(1)=1,$ and the lemma
follows.
\end{proof}
\begin{lemma} \label{l2}
Let $p$ be a prime greater than $5.$ Then
$$
p\sum_{k=1}^{p-1}\frac{(-1)^k H_{k-1}(2)}{k^3\binom{2k}{k}}\equiv \frac{L_p^2-1}{2p^4}+\frac{1}{p^3}
\Bigl({\mathcal L}_1(\varphi^2)+{\mathcal L}_1(\varphi^{-2})\Bigr)-\frac{12H_{p-1}(1)}{5p^3} \pmod{p^2},
$$
where $L_n$ is the $n$th Lucas number defined by the recurrence $L_0=2,$ $L_1=1,$ $L_n=L_{n-1}+L_{n-2},$ $n>1,$
$\varphi=\frac{1+\sqrt{5}}{2}$ is the golden ratio,  and
${\mathcal L}_1(x)=\sum_{k=1}^{p-1}x^k/k$ is the finite $1$-logarithm.
\end{lemma}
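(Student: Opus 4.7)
The target sum $S_{2} := \sum_{k=1}^{p-1}(-1)^{k}H_{k-1}(2)/(k^{3}\binom{2k}{k})$ arises naturally when one combines Tauraso's identity (\ref{eq12}) with the congruence (\ref{eq13}) after introducing an extra factor of $1/k$. The plan is to adapt the scheme of Lemma \ref{l1}, reducing the problem to an independent evaluation of the simpler alternating sum $S_{1} := \sum_{k=1}^{p-1}(-1)^{k}/(k^{3}\binom{2k}{k})$.

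First, divide (\ref{eq12}) (with $n=p$, after subtracting off the $k=0$ contribution) by $t$ and integrate termwise from $0$ to $s=-1$ to obtain the closed form
\[
B \;:=\; \sum_{k=1}^{p-1}\frac{\binom{p}{k}\binom{p+k-1}{k}}{k\binom{2k}{k}} \;=\; \int_{2}^{3}\frac{v_{p}(u)-2}{2(u-2)}\,du \;-\; \frac{1}{2p},
\]
where the correction $-1/(2p)$ accounts for the $k=p$ summand via $\binom{2p-1}{p} = \binom{2p}{p}/2$. On the other hand, applying (\ref{eq13}) directly to each summand of $B$ and expanding $\prod_{m=1}^{k-1}(1-p^{2}/m^{2}) \equiv 1 - p^{2}H_{k-1}(2) \pmod{p^{4}}$ gives
\[
B \;\equiv\; -p^{2}\,S_{1} \;+\; p^{4}\,S_{2} \pmod{p^{5}},
\]
so that, after dividing by $p^{3}$,
\[
p\,S_{2} \;\equiv\; \frac{B}{p^{3}} \;+\; \frac{S_{1}}{p} \pmod{p^{2}}.
\]

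The proof now reduces to evaluating $B$ modulo $p^{5}$ and $S_{1}$ modulo $p^{3}$. For both, the integral representation of $B$ is key: since $v_{p}(2)=2$ and $v_{p}(3)=\varphi^{2p}+\varphi^{-2p}=L_{2p}=L_{p}^{2}+2$ for odd $p$, the endpoint values generate the leading contribution $(L_{p}^{2}-1)/2$. The finite $1$-logarithms ${\mathcal L}_{1}(\varphi^{\pm 2})$ enter because the characteristic quadratic $y^{2}-3y+1=0$ associated with the Lucas sequence $v_{n}(3)=L_{2n}$ has $\varphi^{\pm 2}$ as its roots, so that the $p$-adic expansion of the integral (driven by the Taylor coefficients $v_{p}^{(k)}(2)/k! = 2(-1)^{k-1}p^{2}\prod_{m=1}^{k-1}(1-p^{2}/m^{2})/(k^{2}\binom{2k}{k})$) brings in the sums $\sum_{j=1}^{p-1}\varphi^{\pm 2j}/j$. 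For the independent evaluation of $S_{1}$ modulo $p^{3}$, one extends the Mattarei--Tauraso framework for finite central binomial sums at $d=3$ (which is stated only modulo $p$ in \cite[Cor.~6.4]{mt}) to the sharper modulus $p^{3}$, by retaining the higher-order terms in the characteristic root expansion. Residual harmonic contributions reduce, via the Wolstenholme-type identities of Section \ref{s1.5} (notably $H_{p-1}(1)\equiv 0 \pmod{p^{2}}$), to the single term $-12H_{p-1}(1)/(5p^{3})$.

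The main obstacle is the refined evaluation of $S_{1}$ modulo $p^{3}$, since Mattarei--Tauraso supply it only modulo $p$. This requires delicate $p$-adic bookkeeping: the three RHS terms of the lemma individually have $p$-adic valuations as low as $-3$, and they must combine precisely so that $p\,S_{2}$—of valuation at least $0$—emerges on the left. Producing the exact rational coefficients $1/2$, $1$, and $-12/5$ is the core technical challenge. Once $B$ modulo $p^{5}$ and $S_{1}$ modulo $p^{3}$ are in hand, substitution into $p\,S_{2}\equiv B/p^{3}+S_{1}/p \pmod{p^{2}}$ and simplification via the harmonic identities of Section \ref{s1.5} completes the proof.
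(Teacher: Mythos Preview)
Your overall framework coincides with the paper's: integrate Tauraso's identity \eqref{eq11} (divided by $t$) termwise, then invoke the expansion \eqref{eq13} at $n=p$ to split off the $H_{k-1}(2)$ piece. The integral representation of $B$ and the decomposition $pS_2\equiv B/p^3+S_1/p\pmod{p^2}$ are correct.

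The genuine gap is your evaluation of the integral. You propose to expand $v_p(u)$ in a Taylor series about $u=2$, but by \eqref{eq11} the Taylor coefficients of $v_p(2-t)$ at $t=0$ are precisely (up to sign) the numbers $2\binom{p}{k}\binom{p+k-1}{k}/\binom{2k}{k}$; integrating them term by term therefore just returns $B$, and nothing involving $\varphi^{\pm 2j}$ can emerge that way. The finite logarithms $\mathcal{L}_1(\varphi^{\pm2})$ enter only through the values $v_j(3)=\varphi^{2j}+\varphi^{-2j}$ for $j<p$, not through derivatives of $v_p$. What is actually needed is the exact closed form
\[
\int_0^t\frac{v_n(2-s)-2}{s}\,ds=\frac{v_n(2-t)-2}{n}+2\sum_{k=1}^{n-1}\frac{v_k(2-t)-2}{k}
\]
from \cite[Lemma~5.1]{mt}, which at $t=-1$, $n=p$ immediately yields
\[
B=\frac{v_p(3)-3}{2p}+\sum_{k=1}^{p-1}\frac{v_k(3)}{k}-2H_{p-1}(1),
\]
producing the $(L_p^2-1)/(2p^4)$, the $\mathcal{L}_1(\varphi^{\pm2})/p^3$, and a $-2H_{p-1}(1)/p^3$ contribution all at once. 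Without this identity your argument has no mechanism for getting the lower-index Lucas values $v_k(3)$, $1\le k\le p-1$, into the picture.

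A second, minor point: you do not need to extend the Mattarei--Tauraso $d=3$ result to modulus $p^3$. The required input $-S_1\equiv\frac{2}{5}H_{p-1}(1)/p^2\pmod{p^3}$ is exactly the first congruence in \eqref{eq02.5}, already available from \cite{t1}. Combining $S_1/p\equiv-\frac{2}{5}H_{p-1}(1)/p^3$ with the $-2H_{p-1}(1)/p^3$ from $B$ gives the coefficient $-12/5$ directly, with no delicate bookkeeping.
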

\begin{proof}
Rewrite identity (\ref{eq11}) in the form
$$
\sum_{k=1}^n\frac{\binom{n}{k}\binom{n+k-1}{k}}{\binom{2k}{k}}(-t)^{k}=\frac{v_n(2-t)-2}{2}.
$$
Dividing both sides by $t$ and integrating with respect to $t,$ we see from the relation (see
\cite[Lemma 5.1]{mt})
$$
\int_0^t\frac{v_n(2-t)-2}{t}\,dt=\frac{v_n(2-t)-2}{n}+2\sum_{k=1}^{n-1}\frac{v_k(2-t)-2}{k}
$$
that
\begin{equation}
\sum_{k=1}^n\frac{\binom{n}{k}\binom{n+k-1}{k}}{\binom{2k}{k}} \frac{(-t)^k}{k}=
\frac{v_n(2-t)-2}{2n}+\sum_{k=1}^{n-1}\frac{v_k(2-t)-2}{k}.
\label{eq14}
\end{equation}
Now taking $n=p$ in (\ref{eq14}), by (\ref{eq13}), we obtain
\begin{equation*}
\begin{split}
p^4\sum_{k=1}^{p-1}\frac{t^k H_{k-1}(2)}{k^3\binom{2k}{k}}&\equiv
p^2\sum_{k=1}^{p-1}\frac{t^k}{k^3\binom{2k}{k}}
+\frac{v_p(2-t)-2+t^p}{2p} \\
&+\sum_{k=1}^{p-1}\frac{v_k(2-t)}{k}
-2H_{p-1}(1) \pmod{p^5}.
\end{split}
\end{equation*}
Setting $t=-1$ in the above congruence and employing (\ref{eq02.5}) we deduce
$$
p\sum_{k=1}^{p-1}\frac{(-1)^k H_{k-1}(2)}{k^3\binom{2k}{k}}\equiv \frac{v_p(3)-3}{2p^4}+
\frac{1}{p^3}\sum_{k=1}^{p-1}\frac{v_k(3)}{k}-\frac{12H_{p-1}(1)}{5p^3} \pmod{p^2}.
$$
Since $v_k(3)=\varphi^{2k}+\varphi^{-2k}=L_{2k}=L_k^2+2,$ we conclude the proof.
\end{proof}
Observing that  ${\mathcal L}_1(\varphi^2)+{\mathcal L}_1(\varphi^{-2})=\sum_{k=1}^{p-1}L_{2k}/k$ and
$$
p^2\sum_{k=1}^{p-1}\frac{(-1)^k H_{k-1}(2)}{k^3\binom{2k}{k}}\equiv 0 \pmod{p}
$$\
 we get the following interesting corollary.
\begin{corollary} \label{c1}
Let $p$ be an odd prime and $L_n$ be the $n$th Lucas number. Then we have
$$
\sum_{k=1}^{p-1}\frac{L_{2k}}{k}\equiv \frac{1-L_p^2}{2p}+\frac{12}{5} H_{p-1}(1) \pmod{p^3}.
$$
(Note that $L_p\equiv 1\pmod{p}.$)
\end{corollary}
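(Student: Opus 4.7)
The plan is to derive Corollary~\ref{c1} as an immediate consequence of Lemma~\ref{l2} together with a $p$-adic valuation estimate on the left-hand side of that lemma. The first step I would carry out is to identify $\mathcal{L}_1(\varphi^{2})+\mathcal{L}_1(\varphi^{-2})$ with $\sum_{k=1}^{p-1}L_{2k}/k$ using the relation $\varphi^{2k}+\varphi^{-2k}=L_{2k}$; this turns the combination of finite $1$-logarithms on the right-hand side of Lemma~\ref{l2} into precisely the sum appearing in the corollary.

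Next I would verify that $S:=\sum_{k=1}^{p-1}(-1)^{k}H_{k-1}(2)/(k^{3}\binom{2k}{k})$ has $p$-adic valuation at least $-1$. For $1\le k\le (p-1)/2$ the binomial $\binom{2k}{k}$ is coprime to $p$, so the corresponding summand is a $p$-adic integer; for $(p+1)/2\le k\le p-1$ exactly one factor of $p$ divides $(2k)!$ and none divides $k!$, so $v_{p}(\binom{2k}{k})=1$ and the summand has $v_{p}\ge -1$. Consequently $p^{4}S\equiv 0\pmod{p^{3}}$, which is the form of the stated observation that I will use.

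The core of the argument is then to multiply the congruence of Lemma~\ref{l2} through by $p^{3}$, which promotes a congruence modulo $p^{2}$ into one modulo $p^{5}$:
\[
p^{4}S\equiv \frac{L_{p}^{2}-1}{2p}+\bigl(\mathcal{L}_1(\varphi^{2})+\mathcal{L}_1(\varphi^{-2})\bigr)-\frac{12}{5}H_{p-1}(1)\pmod{p^{5}}.
\]
Each summand on the right is $p$-integral (with $L_{p}\equiv 1\pmod{p}$ ensuring this for the first, via the Frobenius identity $L_{p}=\varphi^{p}+\bar{\varphi}^{p}\equiv (\varphi+\bar{\varphi})^{p}=1\pmod{p}$, so that $L_p^2-1$ is divisible by $p$). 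Since the left-hand side is $0\pmod{p^{3}}$ by the preceding paragraph, the right-hand side is too; rearranging and replacing $\mathcal{L}_1(\varphi^{2})+\mathcal{L}_1(\varphi^{-2})$ by $\sum_{k=1}^{p-1}L_{2k}/k$ yields the desired congruence modulo $p^{3}$.

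Because Lemma~\ref{l2} does all the real work, there is essentially no obstacle here. The only point requiring care is the precision bookkeeping: Lemma~\ref{l2} is stated modulo $p^{2}$ with negative powers of $p$ appearing on its right-hand side, so the correct multiplier is exactly $p^{3}$—just enough to clear those denominators and leave $p$-integral terms on the right, while still producing a useful mod $p^{3}$ statement after the left-hand side is annihilated.
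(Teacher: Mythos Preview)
Your argument is correct and matches the paper's own derivation exactly: the paper simply notes that $\mathcal{L}_1(\varphi^2)+\mathcal{L}_1(\varphi^{-2})=\sum_{k=1}^{p-1}L_{2k}/k$ and that $p^2\sum_{k=1}^{p-1}(-1)^{k}H_{k-1}(2)/(k^3\binom{2k}{k})\equiv 0\pmod{p}$ (your valuation estimate $v_p(S)\ge -1$), and declares the corollary an immediate consequence of Lemma~\ref{l2}. Your write-up spells out the multiply-by-$p^3$ bookkeeping that the paper leaves implicit, but the route is the same; one small point worth adding is that Lemma~\ref{l2} is stated only for $p>5$, so the cases $p=3,5$ should be checked directly.
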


The following lemma refines the corresponding result from \cite[Th.~2.3]{t1}.
\begin{lemma} \label{l3}
Let $p>3$ be a prime. Then
\begin{equation}
H_{p-1}(1,2)\equiv \frac{6}{5}p^2B_{p-5}-H_{p-1}(2,1)\equiv -3\frac{H_{p-1}(1)}{p^2}+\frac{1}{2}p^2B_{p-5} \pmod{p^3}.
\label{eq14.1}
\end{equation}
\end{lemma}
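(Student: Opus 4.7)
The plan is to prove the two congruences of the lemma separately.

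For the first congruence, $H_{p-1}(1,2)\equiv \tfrac{6}{5}p^2B_{p-5}-H_{p-1}(2,1) \pmod{p^3}$, I would apply the stuffle product identity
$$H_{p-1}(1)\cdot H_{p-1}(2) = H_{p-1}(1,2)+H_{p-1}(2,1)+H_{p-1}(3)$$
recorded at the beginning of Section~\ref{s1.5}. By Wolstenholme's theorem (item (b)), $H_{p-1}(1)\in p^{2}\mathbb{Z}_{p}$, and by (a) with $a=2$, $H_{p-1}(2)\in p\mathbb{Z}_{p}$; hence the product $H_{p-1}(1)H_{p-1}(2)$ lies in $p^{3}\mathbb{Z}_{p}$. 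Combined with $H_{p-1}(3)\equiv-\tfrac{6}{5}p^{2}B_{p-5}\pmod{p^3}$ from (a) with $a=3$, this immediately gives the claim after rearrangement.

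For the second congruence, which amounts to an evaluation of $H_{p-1}(1,2)$ modulo $p^{3}$, the plan is to refine the argument of Tauraso in \cite[Th.~2.3]{t1}, where the corresponding congruence $H_{p-1}(1,2)\equiv -3H_{p-1}(1)/p^{2}\pmod{p^{2}}$ is established, by carrying the $p$-adic expansion one order further. The key input is
$$(-1)^{k-1}\binom{p}{k}\binom{p+k-1}{k}=\frac{p^2}{k^2}\prod_{m=1}^{k-1}\!\left(1-\frac{p^2}{m^2}\right)\equiv \frac{p^{2}}{k^{2}}\bigl(1-p^{2}H_{k-1}(2)+p^{4}H_{k-1}(2,2)\bigr)\pmod{p^{6}},$$
i.e., one additional term beyond (\ref{eq13}). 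I would substitute this into the identity (\ref{eq12}) at $t=1$ (so that $v_{p}(1)=1$ for $p>3$), divide by $\binom{2k}{k}$, and sum from $k=1$ to $p-1$. Applying the $p$-analogue~(\ref{eq02.5}) to control $\sum 1/(k^{2}\binom{2k}{k})$ and Lemma~\ref{l1} to control $\sum H_{k-1}(2)/(k^{2}\binom{2k}{k})$, both modulo $p^{3}$, forces the new sum $\sum_{k=1}^{p-1}H_{k-1}(2,2)/(k^{2}\binom{2k}{k})$ to absorb the $B_{p-5}$-correction. Reducing this last sum modulo $p$ (using that $(-1)^{k-1}\binom{p}{k}\binom{p+k-1}{k}\equiv p^{2}/k^{2}\pmod{p^{4}}$ inverts the $1/\binom{2k}{k}$ weight) converts it into a $\mathbb{F}_{p}$-linear combination of the depth-$3$ multiple harmonic sums $H_{p-1}(1,1,2)$, $H_{p-1}(1,2,1)$, $H_{p-1}(2,1,1)$ listed in~(f).

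The main obstacle is the final coefficient-tracking: the target value $\tfrac{1}{2}$ of $p^{2}B_{p-5}$ only emerges after a sequence of cancellations involving the fractions $\tfrac{11}{10},-\tfrac{9}{10},\tfrac{3}{5}$ from (f), the $\tfrac{1}{10},-\tfrac{2}{5},2,-2$ contributed by (d), and the $-\tfrac{6}{5},\tfrac{4}{5}$ from~(a), all of which must be combined with the first part of the lemma and Wolstenholme's theorem via (b). A numerical verification at the small primes $p=7,11,13$ is a natural sanity check before committing to the full symbolic calculation, and the method of Tauraso guarantees that no additional information beyond the tabulated values in Section~\ref{s1.5} and the identities already in this paper is required.
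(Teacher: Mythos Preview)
Your argument for the first congruence is correct and coincides with the paper's: the stuffle identity together with the $p$-adic valuations of $H_{p-1}(1)$, $H_{p-1}(2)$ and the evaluation of $H_{p-1}(3)$ from~(a) immediately yields $H_{p-1}(1,2)+H_{p-1}(2,1)\equiv\tfrac{6}{5}p^{2}B_{p-5}\pmod{p^{3}}$.

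Your plan for the second congruence, however, does not lead to $H_{p-1}(1,2)$ at all. Pushing the expansion~(\ref{eq13}) one order further and inserting it into~(\ref{eq12}) at $t=1$ produces only a relation among the three central-binomial sums
\[
\sum_{k=1}^{p-1}\frac{1}{k^{2}\binom{2k}{k}},\qquad
\sum_{k=1}^{p-1}\frac{H_{k-1}(2)}{k^{2}\binom{2k}{k}},\qquad
\sum_{k=1}^{p-1}\frac{H_{k-1}(2,2)}{k^{2}\binom{2k}{k}},
\]
and the double harmonic sum $H_{p-1}(1,2)$ never appears in that relation; there is no step in your outline where it could enter. (What you are describing is a refinement of the proof of Lemma~\ref{l1}, not of the evaluation of $H_{p-1}(1,2)$.) In addition, the precision you invoke is not available: to isolate the third sum modulo~$p$ from this relation you would need the first sum modulo~$p^{5}$ and the second modulo~$p^{3}$, whereas (\ref{eq02.5}) gives the first only modulo~$p^{3}$ and Lemma~\ref{l1} gives the second only modulo~$p$. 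The $p^{2}$-refinement of Lemma~\ref{l1} that you would need is exactly the open Conjecture~1. Finally, the claim that $(-1)^{k-1}\binom{p}{k}\binom{p+k-1}{k}\equiv p^{2}/k^{2}\pmod{p^{4}}$ ``inverts the $1/\binom{2k}{k}$ weight'' is not correct; this congruence carries no information about $\binom{2k}{k}$.

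The paper's proof of the second congruence proceeds via a different identity (also from Tauraso's~\cite[proof of Th.~2.3]{t1}),
\[
\sum_{k=1}^{n}\frac{1}{k^{2}}=\sum_{1\le i\le j\le n}\frac{(-1)^{j-1}}{ij}\binom{n}{j},
\]
specialised to $n=p$. Expanding $(-1)^{j-1}\binom{p-1}{j-1}\equiv 1-pH_{j-1}(1)+p^{2}H_{j-1}(1,1)\pmod{p^{3}}$ makes $H_{p-1}(1,2)$ appear directly through the inner sum $\sum_{1\le i<j\le p-1}1/(ij^{2})$, and the remaining multiple harmonic sums are then evaluated using the congruences listed in (a)--(f) together with~(b).
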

\begin{proof}
The first congruence in (\ref{eq14.1}) easily follows from the identity
\begin{equation}
H_{k}(1)H_{k}(2)=H_{k}(1,2)+H_{k}(2,1)+H_{k}(3).
\label{h121}
\end{equation}
Indeed, by (a) we have that for any prime $p>3,$ $H_{p-1}(1)\equiv 0\pmod{p^2},$
$H_{p-1}(2)\equiv 0\pmod{p}$ and for $p>5,$
$H_{p-1}(3)\equiv -\frac{6}{5}p^2B_{p-5} \pmod{p^3},$ which implies
$$
H_{p-1}(2,1)+H_{p-1}(1,2)\equiv\frac{6}{5}p^2B_{p-5} \pmod{p^3}, \qquad p>5.
$$
To prove the second congruence in (\ref{eq14.1}), we consider the following identity (see \cite[proof of Th.~2.3]{t1}):
$$
\sum_{k=1}^n\frac{1}{k^2}=\sum_{1\le i\le j\le n}\frac{(-1)^{j-1}}{ij}\binom{n}{j}, \qquad n\in{\mathbb N}.
$$
Setting $n=p$ we get
\begin{equation*}
\begin{split}
H_{p-1}(2)&=p\sum_{1\le i\le j\le p-1}\frac{(-1)^{j-1}}{ij^2}\binom{p-1}{j-1}+\frac{H_{p-1}(1)}{p} \\
&\equiv p\sum_{1\le i\le j\le p-1}\frac{1-pH_{j-1}(1)+p^2H_{j-1}(1,1)}{ij^2}+\frac{H_{p-1}(1)}{p} \\
&\equiv pH_{p-1}(3)+pH_{p-1}(1,2)+\frac{H_{p-1}(1)}{p}-p^2H_{p-1}(1,3)-p^2\sum_{1\le i<j\le p-1}
\frac{H_{j-1}(1)}{ij^2} \\
&+p^3H_{p-1}(1,1,3)+p^3\sum_{1\le i<j\le p-1}\frac{H_{j-1}(1,1)}{ij^2} \pmod{p^4}.
\end{split}
\end{equation*}
Since
$$
\sum_{1\le i<j\le p-1}\frac{H_{j-1}(1)}{ij^2}=\sum_{j=1}^{p-1}\frac{H_{j-1}^2(1)}{j^2}
\quad\text{and}\,\,\,
\sum_{1\le i< j\le p-1}\frac{H_{j-1}(1,1)}{ij^2}=\sum_{j=1}^{p-1}\frac{H_{j-1}(1,1)H_{j-1}(1)}{j^2},
$$
then by the formulas
\begin{equation}
H_{n}^2(1)=2H_{n}(1,1)+H_{n}(2)
\label{111}
\end{equation}
and
$$
H_{n}(1,1)H_{n}(1)=3H_{n}(1,1,1)+H_{n}(2,1)+H_{n}(1,2),
$$
we get
\begin{equation}
\begin{split}
H_{p-1}(2)&\equiv pH_{p-1}(3)+pH_{p-1}(1,2)+\frac{H_{p-1}(1)}{p}-p^2H_{p-1}(1,3) \\
&-2p^2H_{p-1}(1,1,2)-p^2H_{p-1}(2,2)+p^3H_{p-1}(1,1,3)+3p^3H_{p-1}(1,1,1,2) \\[5pt]
&+p^3H_{p-1}(2,1,2)+p^3H_{p-1}(1,2,2)
\pmod{p^4}.
\end{split}
\label{h12}
\end{equation}
Now we can evaluate the right-hand side modulo $p^4$ using known congruences for multiple harmonic sums.
By (c)--(e), for any prime $p>5$ we have
\begin{equation}
H_{p-1}(2,2)\equiv -\frac{2}{5}pB_{p-5}, \qquad H_{p-1}(1,3)\equiv -\frac{9}{10}pB_{p-5} \pmod{p^2}
\label{5}
\end{equation}
and
$$
H_{p-1}(1,2,2)\equiv -\frac{3}{2}B_{p-5}, \quad
H_{p-1}(2,1,2)\equiv 0, \quad
H_{p-1}(1,1,3)\equiv \frac{1}{2}B_{p-5} \pmod{p}.
$$
Further by (b) and (f),
substituting  the above congruences in (\ref{h12}) and simplifying we conclude that for $p>5,$
$$
H_{p-1}(1,2)\equiv -3\frac{H_{p-1}(1)}{p^2}+\frac{1}{2}p^2B_{p-5} \pmod{p^3}.
$$
The validity of (\ref{eq14.1}) for $p=5$ can be easily checked by hand.
\end{proof}

\section{Proofs of the main results} \label{s3}

{\bf Proof of Theorem \ref{t1}.}

For any non-negative integers $n, k,$ consider the pair of functions
\begin{equation}
\begin{split}
F(n,k)&=\frac{(-1)^{n+k} (n-k-1)! \,k!^2}{(n+k+1)!}\, H_k(2), \qquad n\ge k+1, \\[3pt]
G(n,k)&=\frac{2(-1)^{n+k} (n-k)! k!^2}{(n+k+1)! (n+1)} \Bigl(H_k(2)-\frac{1}{(n+1)^2}\Bigr), \qquad n\ge k.
\end{split}
\label{eq15}
\end{equation}
By straightforward verification it is easy to check that $(F,G)$ is a WZ pair, i.e.,
\begin{equation}
F(n+1,k)-F(n,k)=G(n,k+1)-G(n,k)
\label{eq16}
\end{equation}
for any $n, k\ge 0,$ $n\ge k+1.$ Now putting $h(n):=\sum_{k=0}^{n-1}F(n,k),$ $n\ge 1,$ and summing
(\ref{eq16}) over $k=0,1,\dots,n-1$ we obtain
\begin{equation}
h(n+1)-h(n)=G(n,n)+F(n+1,n)-G(n,0).
\label{eq17}
\end{equation}
Again, summing (\ref{eq17}) over $n=1,2,\dots,N$ we get
$$
h(N+1)-h(1)=\sum_{n=1}^N(G(n,n)+F(n+1,n))-\sum_{n=1}^NG(n,0)
$$
which is equivalent to the following summation formula:
\begin{equation}
\sum_{n=0}^NG(n,0)=\sum_{n=0}^N(G(n,n)+F(n+1,n))-\sum_{k=0}^NF(N+1,k).
\label{eq18}
\end{equation}
Now substituting the WZ pair $(F,G)$ defined by (\ref{eq15}) in (\ref{eq18}), simplifying
and replacing $N$ by $N-1$ we get the identity
$$
4\sum_{k=1}^N\frac{1}{k^4\binom{2k}{k}}=3\sum_{k=1}^N\frac{H_{k-1}(2)}{k^2\binom{2k}{k}}
-2\sum_{k=1}^N\frac{(-1)^k}{k^4}+
\sum_{k=1}^N\frac{(-1)^{N+k}(N-k)! (k-1)!^2 H_{k-1}(2)}{(N+k)!}.
$$
Setting $N=p-1$ and observing that
\begin{equation}
\begin{split}
&(-1)^k\frac{(p-1-k)! (k-1)!^2}{(p-1+k)!}=\frac{1}{pk}\prod_{m=1}^k\Bigl(1-\frac{p}{m}\Bigr)^{-1}
\prod_{m=1}^{k-1}\Bigl(1+\frac{p}{m}\Bigr)^{-1} \\
&\equiv \frac{1}{pk}\Bigl(1+pH_k(1)+p^2(H_k^2(1)-H_k(1,1))\Bigr)\Bigr(1-pH_{k-1}(1)+p^2(H_{k-1}^2(1)-H_{k-1}(1,1))\Bigr) \\
&\equiv \frac{1}{pk}+\frac{1}{k^2}+\frac{pH_k(2)}{k} \pmod{p^2},
\end{split}
\label{23}
\end{equation}
where in the last congruence we used (\ref{111}),  we obtain
\begin{equation*}
\begin{split}
\qquad &\qquad \qquad\qquad\qquad \quad
4\sum_{k=1}^{p-1}\frac{1}{k^4\binom{2k}{k}}-3\sum_{k=1}^{p-1}\frac{H_{k-1}(2)}{k^2\binom{2k}{k}} \\
&\equiv
-2H_{p-1}(-4)+\frac{1}{p}H_{p-1}(2,1)+H_{p-1}(2,2)+p
\sum_{k=1}^{p-1}\frac{H_{k-1}(2)H_k(2)}{k} \pmod{p^2}.
\end{split}
\end{equation*}
Applying to the last sum the identity
$$
(H_k(2))^2=2H_k(2,2)+H_k(4)
$$
we get
\begin{equation}
\begin{split}
&\qquad\qquad\quad 4\sum_{k=1}^{p-1}\frac{1}{k^4\binom{2k}{k}}-3\sum_{k=1}^{p-1}\frac{H_{k-1}(2)}{k^2\binom{2k}{k}} \\
&\equiv
-2H_{p-1}(-4)+\frac{1}{p}H_{p-1}(2,1)+H_{p-1}(2,2) \\[3pt]
&+pH_{p-1}(2,3)+2pH_{p-1}(2,2,1)+pH_{p-1}(4,1) \pmod{p^2}.
\label{eq19}
\end{split}
\end{equation}
Now from (d) for $p\ge 5$ we have
\begin{equation}
H_{p-1}(2,3)\equiv -2B_{p-5}, \qquad H_{p-1}(4,1)\equiv -B_{p-5} \pmod{p}.
\label{eq20}
\end{equation}
Similarly, from (e) we obtain
$$
H_{p-1}(2,2,1)\equiv \frac{3}{2}B_{p-5} \pmod{p}, \qquad p>5.
$$
Finally, substituting the above congruences in (\ref{eq19}), by Lemma \ref{l3}, (g) and (\ref{5}), we
get the required congruence for all primes $p\ge 7.$ The validity of Theorem \ref{t1} for $p=5$
can be easily checked by straightforward verification. \qed

\vspace{2cm}

{\bf Proof of Theorem \ref{t2}.}

From Theorem \ref{t1} and Lemma \ref{l1} we easily get the first congruence of Theorem \ref{t2}. To prove
the second one, which is dual to the first congruence, it is sufficient to apply \cite[Th.~3.3]{t1}
or simply note that for $k=1,2,\dots, p-1$ we have
$$
\frac{p}{k\binom{2k}{k}}\equiv \frac{1}{2}\binom{2(p-k)}{p-k} \pmod{p}. \qed
$$

\vspace{0.3cm}

{\bf Proof of Theorem \ref{t3}.}

For non-negative integers $n,k$ define the pair of functions
\begin{equation*}
\begin{split}
F(n,k)&=\frac{(-1)^k (n-k-1)! k!^2}{2(k+1) (n+k+1)!} \,H_k(2), \qquad n\ge k+1, \\[5pt]
G(n,k)&=\frac{(-1)^k(n-k)! k!^2}{(n+k+1)!(n+1)^2}\left(H_k(2)-\frac{1}{(n+1)^2}\right), \qquad n\ge k.
\end{split}
\end{equation*}
It is easy to check that the pair $(F,G)$ is a WZ pair, i.e.,
$$
F(n+1,k)-F(n,k)=G(n,k+1)-G(n,k)
$$
for any $n,k\ge 0,$ $n\ge k+1.$ Applying formula (\ref{eq18}) to $(F,G)$ and replacing $N$
by $N-1$ we get the identity
$$
2\sum_{k=1}^N\frac{(-1)^{k-1}}{k^5\binom{2k}{k}}-\frac{5}{2}\sum_{k=1}^N\frac{(-1)^{k-1}H_{k-1}(2)}%
{k^3\binom{2k}{k}}=\sum_{k=1}^N\frac{1}{k^5}+\frac{1}{2}\sum_{k=1}^N
\frac{(-1)^k(N-k)! (k-1)!^2 H_{k-1}(2)}{k(N+k)!}.
$$
Setting $N=p-1$ and employing (\ref{23}) we get
\begin{equation*}
\begin{split}
2\sum_{k=1}^{p-1}\frac{(-1)^{k-1}}{k^5\binom{2k}{k}}&-\frac{5}{2}\sum_{k=1}^{p-1}\frac{(-1)^{k-1}H_{k-1}(2)}%
{k^3\binom{2k}{k}}\equiv H_{p-1}(5)+\frac{1}{2}\sum_{k=1}^{p-1}\frac{H_{k-1}(2)}{k}\left(
\frac{1}{pk}+\frac{1}{k^2}\right) \\
&\equiv H_{p-1}(5)+\frac{1}{2p}H_{p-1}(2,2)+\frac{1}{2}H_{p-1}(2,3) \pmod{p}.
\end{split}
\end{equation*}
Now taking into account (\ref{5}), (\ref{eq20}) and the congruence \cite[Cor.~5.1]{zhs} $H_{p-1}(5)\equiv 0 \pmod{p}$
for $p>5,$ we get the required statement. The validity of Theorem \ref{t3} for $p=5$ can be easily checked by hand. \qed

Theorem \ref{t4} follows immediately from Theorem \ref{t3} and Lemma \ref{l2}.

\section{A $p$-analogue of Zeilberger's series for $\zeta(2).$} \label{s4}

Let $k$ be a non-negative integer. Define the sequence $\{b_{m,k}\}_{m\ge 0}$ by the power series
expansion
$$
\prod_{j=1}^k\left(1+\frac{a}{j}\right)^{-2}=\sum_{m=0}^{\infty}b_{m,k}a^m \qquad\text{if}
\quad k\ge 1, \qquad |a|<1,
$$
and put $b_{0,0}=1,$ $b_{m,0}=0,$ $m\ge 1.$
\begin{lemma} \label{l4}
For any non-negative integers $m, k$ the sequence $\{b_{m,k}\}$ consists of rational numbers
satisfying the following recurrence:
\begin{equation}
b_{m,k}=\sum_{j=0}^m\frac{(-1)^j(j+1)}{k^j}\,b_{m-j,k-1}, \quad m\ge 0, \,\,\, k\ge 1,
\label{eq21}
\end{equation}
$$
b_{0,k}=1, \quad k\ge 0, \qquad b_{m,0}=0, \quad m\ge 1.
$$
Moreover, for any prime $p>3$ and a positive integer $m$ we have
\begin{equation*}
b_{m,p-1}\equiv\begin{cases}
0 \pmod{p^2},     & \quad \text{if} \quad m \quad \text{is odd}; \\
0 \pmod{p}, & \quad\text{if} \quad m \quad \text{is even}.
\end{cases}
\end{equation*}
The first few values of this sequence are as follows:
\begin{equation}
\begin{split}
b_{1,k}&=-2H_k(1), \qquad b_{2,k}=3H_k^2(1)-2H_k(1,1), \\[3pt]
b_{3,k}&=6H_k(1)H_k(1,1)-2H_k(1,1,1)-4H_k^3(1),
\label{eq22}
\end{split}
\end{equation}
$$
b_{4,k}=5H_k^4(1)+6H_k(1)H_k(1,1,1)+3H_k^2(1,1)-12H_k^2(1)H_k(1,1)-2H_k(1,1,1,1).
$$
\end{lemma}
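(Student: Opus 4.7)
The plan is to prove the three components of the lemma in order: the recurrence (\ref{eq21}), the closed forms (\ref{eq22}), and the divisibility statement for $k=p-1$.

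For the recurrence, the natural step is to peel off the last factor of the product,
$$
\prod_{j=1}^{k}\Bigl(1+\frac{a}{j}\Bigr)^{-2}=\Bigl(1+\frac{a}{k}\Bigr)^{-2}\prod_{j=1}^{k-1}\Bigl(1+\frac{a}{j}\Bigr)^{-2},
$$
and expand the detached factor by the binomial series, $(1+a/k)^{-2}=\sum_{j\geq 0}(-1)^{j}(j+1)(a/k)^{j}$. Taking the Cauchy product of the two power series and reading off the coefficient of $a^m$ reproduces exactly (\ref{eq21}); the initial conditions $b_{0,k}=1$ and $b_{m,0}=0$ for $m\geq 1$ are immediate from the definition, and rationality of $b_{m,k}$ then follows by induction on $k$.

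The closed forms (\ref{eq22}) can be obtained either by iterating the recurrence for $m\leq 4$ (each step being a short, routine computation), or more transparently by writing
$$
\log P(a)^{-2}=-2\sum_{j=1}^{k}\log\Bigl(1+\frac{a}{j}\Bigr)=2\sum_{n\geq 1}\frac{(-a)^{n}}{n}H_k(n),
$$
exponentiating, and collecting terms through order $a^4$. Products such as $H_k(1)^{2}$ are then rewritten in symmetric form using the stuffle identities recorded at the start of Section \ref{s1.5}, e.g., $H_k^2(1)=2H_k(1,1)+H_k(2)$, and its higher analogues, which yields exactly the expressions stated in (\ref{eq22}).

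The substantive part of the lemma is the divisibility. Specialize $k=p-1$, so that $P(a):=\prod_{j=1}^{p-1}(1+a/j)=\sum_{r=0}^{p-1}H_{p-1}(\{1\}^{r})a^{r}$. By property (c) of Section \ref{s1.5} applied with the symbol $a=1$ in that formula, $H_{p-1}(\{1\}^{r})\equiv 0\pmod{p^2}$ whenever $r$ is odd, and $H_{p-1}(\{1\}^{r})\equiv 0\pmod{p}$ whenever $r$ is even with $r\geq 2$. Hence modulo $p^{2}$ the series $P(a)$ contains only even powers of $a$, and the same is therefore true of $P(a)^{-2}$; this yields $b_{m,p-1}\equiv 0\pmod{p^{2}}$ for every odd $m$. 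For the even case, the sharper input is Wilson's theorem in the form $\prod_{j=1}^{p-1}(a+j)\equiv a^{p-1}-1\pmod{p}$, so that $P(a)\equiv 1-a^{p-1}\pmod{p}$ and $P(a)^{-2}\equiv(1-a^{p-1})^{-2}\pmod{p}$, a series containing only powers of $a^{p-1}$.

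The main obstacle is conceptual rather than computational: one has to notice that the parity split in the statement mirrors exactly the parity split of property (c) for the homogeneous multiple harmonic sums $H_{p-1}(\{1\}^{r})$. Once the generating function $P(a)$ is viewed through that lens, the odd-$m$ claim is essentially automatic, and the even-$m$ claim reduces to the classical Wolstenholme/Wilson input. The only small technical point is to verify that $P\mapsto P^{-2}$ preserves congruences modulo $p^{2}$ of formal power series, which is standard since $P(0)=1$ is a $p$-adic unit.
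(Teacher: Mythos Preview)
Your derivation of the recurrence \eqref{eq21} and of the explicit formulas \eqref{eq22} is essentially the paper's: peel off the last factor, multiply the two series, and read off coefficients. For the divisibility claim you take a different route than the paper. The paper introduces the auxiliary coefficients $c_{m,k}$ of $P(a)^{-1}=\prod_{j=1}^{k}(1+a/j)^{-1}$, proves by induction on $m$ (using the recurrence $c_{m,k}=-\sum_{j=1}^{m}H_{k}(\{1\}^{j})c_{m-j,k}$ together with property~(c)) that $c_{m,p-1}$ has the claimed parity-dependent $p$-divisibility, and then transfers this to $b_{m,p-1}$ via the convolution $b_{m,k}=\sum_{j}c_{j,k}c_{m-j,k}$. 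You instead argue directly with $P(a)$: for odd $m$ you use that $P(a)$ is even in $a$ modulo $p^{2}$ (by property~(c)), and for even $m$ you invoke the Wilson-type factorisation $P(a)\equiv 1-a^{p-1}\pmod{p}$. Your even-$m$ argument is a pleasant shortcut that the paper does not use; the odd-$m$ argument is the same input repackaged without the auxiliary $c$'s.

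One caveat you should make explicit: property~(c) requires $p>r+2$, so it says nothing about $H_{p-1}(\{1\}^{p-2})$, and in fact $H_{p-1}(\{1\}^{p-2})=\tfrac{p(p-1)}{2(p-1)!}$ has $p$-adic valuation exactly~$1$. Hence $P(a)$ is \emph{not} even in $a$ modulo $p^{2}$ as a full polynomial, only up to degree $p-3$; your parity argument therefore proves the odd case only for $m\le p-3$, and your Wilson argument proves the even case only for $m$ not a multiple of $p-1$. The paper's inductive proof has exactly the same implicit range restriction (its induction step breaks at $m=p-2$ for the same reason), and since the lemma is applied only for $m\le 5<p-2$ this is harmless; but as the lemma is stated for \emph{all} positive $m$, neither argument establishes it in full generality.
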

\begin{proof}
Observing that
$$
\left(1+\frac{a}{k}\right)^{-2}=\sum_{j=0}^{\infty}\frac{(-1)^j(j+1)}{k^j}\,a^j, \qquad\quad |a|<1,
$$
and multiplying the series
$$
\left(1+\frac{a}{k}\right)^{-2}\cdot\sum_{m=0}^{\infty}b_{m,k-1} a^m=\sum_{m=0}^{\infty}b_{m,k} a^m
$$
we get the required recurrence (\ref{eq21}). Expanding for $k\ge 1,$
$$
\prod_{j=1}^k\left(1+\frac{a}{j}\right)^{-1}=\left(\sum_{j=0}^kH_k(\{1\}^j)a^j\right)^{-1}
=\sum_{m=0}^{\infty}c_{m,k}a^m, \qquad |a|<1,
$$
and using the usual multiplication of the series we get
$$
c_{0,k}=1, \qquad \sum_{j=0}^m H_k(\{1\}^j)c_{m-j,k}=0, \qquad m\ge 1,
$$
which yields the recurrence formula for the coefficients $c_{m,k}:$
\begin{equation}
c_{0,k}=1, \qquad c_{m,k}=-\sum_{j=1}^mH_k(\{1\}^j)c_{m-j,k}, \qquad m\ge 1.
\label{eq23}
\end{equation}
From (\ref{eq23}) it follows easily by induction on $m$ that
\begin{equation}
c_{m,p-1}\equiv\begin{cases}
0 \pmod{p^2}     & \quad \text{if} \quad m \quad \text{is odd}, \\
0 \pmod{p} & \quad\text{if} \quad m \quad \text{is even}.
\end{cases}
\label{eq24}
\end{equation}
Indeed, for $m=1$ we have from (\ref{eq23}) by Wolstenholme's theorem,
$c_{1,p-1}=-H_{p-1}(1)\equiv 0\pmod{p^2}.$ If $m>1$ is odd (even), then taking into account
that the numbers $j$ and $m-j$ have  different (the same) parity and
\begin{equation*}
H_{p-1}(\{1\}^j)\equiv\begin{cases}
0 \pmod{p^2}     & \quad \text{if} \quad j \quad \text{is odd}, \\
0 \pmod{p}  & \quad\text{if} \quad j \quad \text{is even},
\end{cases}
\end{equation*}
by recurrence (\ref{eq23}), we get the congruence (\ref{eq24}). Since
\begin{equation}
b_{m,k}=\sum_{j=0}^mc_{j,k}c_{m-j,k},
\label{eq25}
\end{equation}
by (\ref{eq24}), we get the required congruences for $b_{m,p-1}.$  Formulas (\ref{eq22}) can be readily
obtained from relations (\ref{eq23}) and (\ref{eq25}).
\end{proof}
\begin{proposition} \label{p1}
For any positive integers $m,n$ we have
$$
\sum_{k=1}^n\frac{(3k-2)b_{m,k-1}+2b_{m-1,k-1}}{k\binom{2k}{k}}=
-\frac{b_{m,n}}{\binom{2n}{n}}+\sum_{k=1}^n\frac{3kb_{m-2,k}+2b_{m-3,k}}{k^3\binom{2k}{k}},
$$
where $b_{-2,k}=b_{-1,k}:=0,$ and
\begin{equation}
\sum_{k=1}^n\frac{3k-2}{k\binom{2k}{k}}=1-\frac{1}{\binom{2n}{n}}.
\label{eq26}
\end{equation}
\end{proposition}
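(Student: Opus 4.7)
The plan is to derive both identities from a single telescoping argument built on a clean three-term shift recurrence for $\{b_{m,k}\}$. First I would exploit the functional relation $(1+a/k)^{2}\prod_{j=1}^{k}(1+a/j)^{-2}=\prod_{j=1}^{k-1}(1+a/j)^{-2}$, expand $(1+a/k)^{2}=1+2a/k+a^{2}/k^{2}$, and extract the coefficient of $a^{m}$ to obtain
$$
b_{m,k-1}=b_{m,k}+\frac{2}{k}\,b_{m-1,k}+\frac{1}{k^{2}}\,b_{m-2,k},
$$
valid for $m\ge 0$ and $k\ge 1$ with the convention $b_{-1,k}=b_{-2,k}=0$. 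This truncated form is much more convenient for telescoping than the longer recurrence~(\ref{eq21}).

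Next I would set $F(k):=b_{m,k}/\binom{2k}{k}$ and compute $F(k-1)-F(k)$ using the ratio $\binom{2k}{k}/\binom{2k-2}{k-1}=2(2k-1)/k$ combined with the shift recurrence above. A short manipulation should produce
$$
F(k-1)-F(k)=\frac{(3k-2)\,b_{m,k}}{k\binom{2k}{k}}+\frac{4(2k-1)\,b_{m-1,k}}{k^{2}\binom{2k}{k}}+\frac{2(2k-1)\,b_{m-2,k}}{k^{3}\binom{2k}{k}}.
$$
To match this with the left-hand side of the proposition, I would apply the shift recurrence twice more, once to $b_{m,k-1}$ and once to $b_{m-1,k-1}$, reducing $(3k-2)b_{m,k-1}+2b_{m-1,k-1}$ to a linear combination of $b_{m,k}$, $b_{m-1,k}$, $b_{m-2,k}$, $b_{m-3,k}$.

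The key arithmetic check is that after this substitution the coefficients of $b_{m,k}$ and $b_{m-1,k}$ coincide exactly with those in $F(k-1)-F(k)$, the $b_{m-2,k}$-coefficient exceeds the telescoping one by precisely $(7k-2)-2(2k-1)=3k$, and a new $b_{m-3,k}$-term emerges with coefficient $2/k^{2}$. Dividing by $k\binom{2k}{k}$ and rearranging yields the single-step identity
$$
\frac{(3k-2)b_{m,k-1}+2b_{m-1,k-1}}{k\binom{2k}{k}}=F(k-1)-F(k)+\frac{3k\,b_{m-2,k}+2\,b_{m-3,k}}{k^{3}\binom{2k}{k}}.
$$
Summing from $k=1$ to $n$ telescopes the $F$-difference to $F(0)-F(n)=b_{m,0}-b_{m,n}/\binom{2n}{n}$. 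For $m\ge 1$, Lemma~\ref{l4} gives $b_{m,0}=0$, yielding the first identity; for $m=0$ all lower-index $b$-terms vanish by convention while $b_{0,0}=1$ supplies the additional $+1$ on the right of (\ref{eq26}). The only delicate piece is the coefficient bookkeeping in the substitution, and I do not expect any genuine obstacle beyond that.
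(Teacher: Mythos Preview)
Your argument is correct. Both you and the paper telescope the same sequence $F(k)=b_{m,k}/\binom{2k}{k}$, but you go through a different recurrence for the $b$'s. The paper relies on Lemma~\ref{l4}'s formula $b_{m,k}=\sum_{j=0}^{m}(-1)^{j}(j+1)k^{-j}\,b_{m-j,k-1}$, which expresses $b_{m,k}$ as an $(m+1)$-term combination of the preceding column; after summing, this leaves a tail $\sum_{j\ge 2}(-1)^{j}(j+1)k^{-j}b_{m-j,k-1}$ that the paper must separately collapse to $(3kb_{m-2,k}+2b_{m-3,k})/k^{3}$ via an auxiliary computation (formula~(\ref{eq30})). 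You instead read the generating-function relation in the inverse direction, obtaining the three-term recurrence $b_{m,k-1}=b_{m,k}+\tfrac{2}{k}b_{m-1,k}+\tfrac{1}{k^{2}}b_{m-2,k}$ because $(1+a/k)^{2}$ is a quadratic rather than a power series. That makes the single-step identity fall out after elementary coefficient matching, with no need for anything like~(\ref{eq30}). Your route is shorter and arguably more transparent; the paper's route has the minor advantage of reusing a recurrence already recorded in Lemma~\ref{l4}.
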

\begin{proof}
For a non-negative integer $m$ consider the difference
\begin{equation}
\frac{b_{m,k}}{\binom{2k}{k}}-\frac{b_{m,k-1}}{\binom{2k}{k}}=
\frac{b_{m,k}}{\binom{2k}{k}}-\frac{b_{m,k-1}}{\binom{2k-2}{k-1}}+\frac{(3k-2) b_{m,k-1}}{k\binom{2k}{k}}.
\label{eq27}
\end{equation}
Summing (\ref{eq27}) over $k=1,2,\dots,n$ we get
\begin{equation}
\sum_{k=1}^n\left(\frac{b_{m,k}}{\binom{2k}{k}}-\frac{b_{m,k-1}}{\binom{2k}{k}}\right)=
\frac{b_{m,n}}{\binom{2n}{n}}-b_{m,0}+\sum_{k=1}^n\frac{(3k-2) b_{m,k-1}}{k\binom{2k}{k}}.
\label{eq28}
\end{equation}
Putting $m=0$ in (\ref{eq28}) implies (\ref{eq26}). If $m\ge 1,$ then $b_{m,0}=0$ and by the recurrence
relation (\ref{eq21}), we have
\begin{equation}
\frac{b_{m,n}}{\binom{2n}{n}}+\sum_{k=1}^n\frac{(3k-2)b_{m,k-1}}{k\binom{2k}{k}}=
-\sum_{k=1}^n\frac{2b_{m-1,k-1}}{k\binom{2k}{k}}+\sum_{k=1}^n\sum_{j=2}^m\frac{(-1)^j(j+1)}{k^j\binom{2k}{k}}\,b_{m-j,k-1}.
\label{eq29}
\end{equation}
If $m=1,$ then the double sum in (\ref{eq29}) is empty and we get the desired identity. If $m\ge 2,$ then we note
that by (\ref{eq21}), we have
\begin{equation}
\begin{split}
3kb_{m-2,k}&+2b_{m-3,k}=3k\sum_{j=0}^{m-2}\frac{(-1)^j(j+1)}{k^j}\,b_{m-2-j,k-1}+
2\sum_{j=0}^{m-3}\frac{(-1)^j (j+1)}{k^j}\,b_{m-3-j,k-1} \\
&=3k b_{m-2,k-1}+
\sum_{j=0}^{m-3}\frac{(-1)^jb_{m-3-j,k-1}}{k^j}\,(2j+2-3j-6) \\
&=
3kb_{m-2,k-1}+\sum_{j=3}^m\frac{(-1)^j(j+1)b_{m-j,k-1}}{k^{j-3}}=
k^3\sum_{j=2}^m\frac{(-1)^j(j+1)b_{m-j,k-1}}{k^j}.
\end{split}
\label{eq30}
\end{equation}
Now from (\ref{eq29}) and (\ref{eq30}) we deduce the required statement.
\end{proof}
\begin{remark}
Note that formula (\ref{eq26}) also follows from \cite[Th.~5.4]{mt}.
\end{remark}
\begin{proposition} \label{p2}
Let $p>5$ be a prime. Then the following congruences are true:
\begin{eqnarray*}
p\sum_{k=1}^{p-1}\frac{3kb_{1,k}+2b_{0,k}}{k^3\binom{2k}{k}}&\equiv&\frac{4H_{p-1}(1)}{p} \pmod{p^3}, \\
p\sum_{k=1}^{p-1}\frac{3kb_{2,k}+2b_{1,k}}{k^3\binom{2k}{k}}&\equiv& \,\, 0  \quad\qquad\,\pmod{p^2}, \\
p\sum_{k=1}^{p-1}\frac{3kb_{3,k}+2b_{2,k}}{k^3\binom{2k}{k}}&\equiv& \,\, 0 \quad\qquad\,  \pmod{p}.
\end{eqnarray*}
\end{proposition}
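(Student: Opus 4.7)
The strategy is to apply Proposition \ref{p1} with $m$ replaced by $m+2$ and $n=p-1$, which rewrites the target sum as
\begin{equation*}
p\sum_{k=1}^{p-1}\frac{3kb_{m,k}+2b_{m-1,k}}{k^3\binom{2k}{k}}=\frac{p\,b_{m+2,p-1}}{\binom{2(p-1)}{p-1}}+p\sum_{k=1}^{p-1}\frac{(3k-2)b_{m+2,k-1}+2b_{m+1,k-1}}{k\binom{2k}{k}},
\end{equation*}
thereby trading a denominator $k^3$ for $k$ at the cost of increasing the index of the $b$'s. The first term is a boundary contribution which I would dispose of as follows: Wolstenholme gives $\binom{2p-1}{p-1}\equiv 1\pmod{p^3}$ and hence $\binom{2(p-1)}{p-1}^{-1}\equiv 2\pmod{p^3}$; by Lemma \ref{l4} the coefficient $b_{m+2,p-1}$ vanishes modulo $p^2$ when $m+2$ is odd (cases $m=1,3$) and modulo $p$ when $m+2$ is even (case $m=2$). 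Combined with the factor of $p$ in front, this forces the boundary to vanish modulo the asserted power of $p$. Concretely, for $m=1$ one uses part~(c) of Section~\ref{s1.5} to get $H_{p-1}(1,1,1)\equiv-\tfrac{2}{5}p^2B_{p-5}\pmod{p^3}$, Wolstenholme to kill the $H_{p-1}(1)H_{p-1}(1,1)$ and $H_{p-1}^3(1)$ summands, and then $b_{3,p-1}\equiv\tfrac{4}{5}p^2B_{p-5}\pmod{p^3}$, giving a boundary $\equiv 0\pmod{p^3}$.

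Having removed the boundary, I would substitute the explicit formulas (\ref{eq22}) for $b_{m+1,k-1}$ and $b_{m+2,k-1}$ into the remaining sum, first flattening any product of multiple harmonic sums (such as $H_{k-1}(1)\cdot H_{k-1}(1,1)$ appearing in $b_{3,k-1}$) via the stuffle identity
\begin{equation*}
H_n(1,1)\cdot H_n(1)=3H_n(1,1,1)+H_n(2,1)+H_n(1,2).
\end{equation*}
This expresses the residual sum as a $\mathbb{Z}$-linear combination of central-binomial sums of the shape $p\sum_{k=1}^{p-1}\frac{(3k-2)H_{k-1}(\vec a)}{k\binom{2k}{k}}$, with $\vec a$ a tuple of $1$'s. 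Each such sum can be evaluated to the required modulus by the strategy used to prove Lemmas \ref{l1} and \ref{l2}: specialize (\ref{eq11}) (or a suitable derivative/integral in $t$) to $n=p$, $t=1$, apply (\ref{eq13}) to replace $\binom{p}{k}\binom{p+k-1}{k}$ by $p^2/k^2$, and simplify the resulting Lucas-sequence expression using $v_p(1)=1$.

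The main obstacle is bookkeeping of the $B_{p-5}$-contributions in the tightest case $m=1$ (mod $p^3$): several intermediate sums contribute terms proportional to $p^2B_{p-5}$ or $p^3B_{p-5}$, and one must verify that these all cancel. The cancellations rely on the full palette of congruences catalogued in Section~\ref{s1.5} (parts (a)--(f)) together with Lemma~\ref{l1}; what survives is exactly $4H_{p-1}(1)/p$. For $m=2$ (mod $p^2$) and $m=3$ (mod $p$), the weaker moduli greatly simplify matters, since by (a)--(b) any factor of $H_{p-1}(1)$ is already $O(p^2)$ and any $H_{p-1}(2)$ is $O(p)$, so most cross-terms drop out at once and the remaining skeleton yields $0$.
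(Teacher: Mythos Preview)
Your first step---rewriting via Proposition~\ref{p1} with $m\mapsto m+2$---is correct, and the boundary term $p\,b_{m+2,p-1}/\binom{2p-2}{p-1}$ is indeed negligible by Lemma~\ref{l4}. But the heart of the proof is the evaluation of the residual sum
\[
p\sum_{k=1}^{p-1}\frac{(3k-2)b_{m+2,k-1}+2b_{m+1,k-1}}{k\binom{2k}{k}},
\]
and here your plan breaks down. First, after stuffling, the weights $\vec a$ are \emph{not} tuples of $1$'s: already $H_{k-1}(1)^2=2H_{k-1}(1,1)+H_{k-1}(2)$ and $H_{k-1}(1)H_{k-1}(1,1)=3H_{k-1}(1,1,1)+H_{k-1}(1,2)+H_{k-1}(2,1)$ introduce parts $2$ and $3$; expanding $b_{3,k-1}$, $b_{4,k-1}$, $b_{5,k-1}$ yields a zoo of mixed multiple harmonic sums. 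Second, and more seriously, the machinery of Lemmas~\ref{l1}--\ref{l2} cannot produce these weights. That machinery rests on (\ref{eq11}) with $n=p$ together with (\ref{eq13}), and the expansion of $\prod_{m=1}^{k-1}(1-p^2/m^2)$ only generates the weights $H_{k-1}(2),H_{k-1}(2,2),\dots$; no amount of differentiating or integrating (\ref{eq11}) in $t$ will manufacture $H_{k-1}(1)$, $H_{k-1}(1,1)$ or $H_{k-1}(1,1,1)$ as coefficients. Third, the denominators you land on are $k\binom{2k}{k}$ or $\binom{2k}{k}$, whereas (\ref{eq11})--(\ref{eq13}) inherently deliver $k^2\binom{2k}{k}$ (and, after one integration, $k^3\binom{2k}{k}$). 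So the ``evaluate each piece by the strategy of Lemmas~\ref{l1}--\ref{l2}'' step is not available, and the argument stalls there.

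The paper proceeds by an entirely different route: it introduces a \emph{new} WZ pair depending on a continuous parameter $a$, namely
\[
F(n,k)=\frac{(-1)^{n+k}k!^2n!^2(1+a)_{n-k-1}}{(n+k+1)!(1+a)_n^2},\qquad
G(n,k)=\frac{(-1)^{n+k}k!^2n!^2(1+a)_{n-k}(2+2n+a)}{(n+k+1)!(1+a)_{n+1}^2},
\]
and from the resulting summation formula derives an identity (the paper's (\ref{eq32})) expressing $\sum_{k=1}^N\frac{3kb_{m,k}+2b_{m-1,k}}{k^3\binom{2k}{k}}$ as an \emph{alternating} sum $\sum_k(-1)^{k-1}k^{-3}(\cdots)$ with no central binomial coefficient, plus a tail of the shape $\sum_k(-1)^{N+k}\frac{(N-k)!(k-1)!^2}{(N+k)!}(\cdots)$. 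At $N=p-1$ the alternating part is handled directly by the catalogue in Section~\ref{s1.5}, and the tail is controlled by (\ref{23}) and Lemma~\ref{l4}. Proposition~\ref{p1} is never invoked; its role in the paper is only in the proof of Theorem~\ref{t5}, where Proposition~\ref{p2} is used as input rather than the other way round.
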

\begin{proof}
For any non-negative integers $n, k$ define the pair of functions
\begin{equation*}
\begin{split}
F(n,k)&=\frac{(-1)^{n+k} k!^2 n!^2 (1+a)_{n-k-1}}{(n+k+1)! (1+a)_n^2}, \qquad\qquad\qquad n\ge k+1, \\[3pt]
G(n,k)&=\frac{(-1)^{n+k} k!^2 n!^2 (1+a)_{n-k}(2+2n+a)}{(n+k+1)! (1+a)^2_{n+1}}, \qquad n\ge k,
\end{split}
\end{equation*}
where $(a)_0=1,$ $(a)_m=a(a+1)\cdots(a+m-1),$ $m\ge 1,$ is the Pochhammer symbol.
It is easy to check that the pair $(F,G)$ satisfies (\ref{eq16}) for any $n,k\ge 0,$ $n\ge k+1.$
Applying summation formula (\ref{eq18}) and replacing $N$ by $N-1$ we get the identity:
\begin{equation}
\begin{split}
\sum_{k=1}^N\frac{(-1)^{k-1}\bigl(1+\frac{k}{k+a}\bigr)}{k^2\prod_{j=1}^k\bigl(1+\frac{a}{j}\bigr)}
&=\sum_{k=1}^N\frac{3k+2a}{k^3\binom{2k}{k}\prod_{j=1}^k\bigl(1+\frac{a}{j}\bigr)^2} \\
&+\prod_{j=1}^N\Bigl(1+\frac{a}{j}\Bigr)^{-2}\cdot\sum_{k=1}^N\frac{(-1)^{N+k}(1+a)_{N-k}(k-1)!^2}{(N+k)!}.
\end{split}
\label{eq31}
\end{equation}
Now if we expand the summands in powers of $a,$ it is easy to notice that
$$
\sum_{k=1}^N\frac{3k+2a}{k^3\binom{2k}{k}\prod_{j=1}^k\bigl(1+\frac{a}{j}\bigr)^2}
=\sum_{m=0}^{\infty}\left(\sum_{k=1}^N\frac{3kb_{m,k}+2b_{m-1,k}}{k^3\binom{2k}{k}}\right)a^m.
$$
Therefore, comparing coefficients of $a^m$ on both sides of (\ref{eq31}) leads to the
following family of identities for all $m\ge 1:$
\begin{equation}
\begin{split}
\sum_{k=1}^N\frac{3kb_{m,k}+2b_{m-1,k}}{k^3\binom{2k}{k}}&=\!\sum_{k=1}^N
\frac{(-1)^{k-1}\!}{k^3}\Bigl(2k\!\sum_{j=0}^mb_{m-j,k}H_{k-1}(\{1\}^j)+
\!\sum_{j=0}^{m-1}b_{m-1-j,k}H_{k-1}(\{1\}^j)\Bigr) \\
&-\sum_{k=1}^N\frac{(-1)^{N+k}(N-k)!(k-1)!^2}{(N+k)!}\sum_{j=0}^mH_{N-k}(\{1\}^j)b_{m-j,N},
\label{eq32}
\end{split}
\end{equation}
where $H_n(\{1\}^0):=1.$
In particular, for $m=1, 2$ we have
\begin{equation}
\begin{split}
&\sum_{k=1}^N\frac{3kb_{1,k}+2}{k^3\binom{2k}{k}}=\sum_{k=1}^N(-1)^k\Bigl(\frac{2H_{k-1}(1)}{k^2}+\frac{3}{k^3}
\Bigr) \\
&-\sum_{k=1}^N\frac{(-1)^{N+k}(N-k)!(k-1)!^2(b_{1,N}+H_{N-k}(1))}{(N+k)!},
\label{eq33}
\end{split}
\end{equation}
\begin{equation}
\begin{split}
&\sum_{k=1}^N\frac{3kb_{2,k}+2b_{1,k}}{k^3\binom{2k}{k}}=\sum_{k=1}^N\frac{(-1)^{k-1}}{k^2}\Bigl(2H^2_{k}(1)
-2H_k(1,1)+\frac{H_k(1)}{k}+\frac{1}{k^2}
\Bigr) \\
&-\sum_{k=1}^N\frac{(-1)^{N+k}(N-k)!(k-1)!^2}{(N+k)!}(H_{N-k}(1,1)+b_{1,N}H_{N-k}(1)+b_{2,N}).
\label{eq34}
\end{split}
\end{equation}
Setting $N=p-1$ in (\ref{eq33}) by (\ref{eq22}) and (\ref{23}), we obtain
\begin{equation*}
\begin{split}
p\sum_{k=1}^{p-1}\frac{3kb_{1,k}+2b_{0,k}}{k^3\binom{2k}{k}}&\equiv 2pH_{p-1}(1,-2)+3pH_{p-1}(-3) \\
&-
\sum_{k=1}^{p-1}\left(\frac{1}{k}+\frac{p}{k^2}+\frac{p^2H_k(2)}{k}\right)\Bigl(H_{p-1-k}(1)-2H_{p-1}(1)\Bigr) \pmod{p^3}.
\end{split}
\end{equation*}
Since
$$
H_{p-1-k}(1)-H_{p-1}(1)=\frac{1}{1-p}+\cdots+\frac{1}{k-p}\equiv H_k(1)+pH_k(2)+p^2H_k(3) \pmod{p^3},
$$
by (a), (g), for $p>5$ we have
\begin{equation*}
\begin{split}
p\sum_{k=1}^{p-1}\frac{3kb_{1,k}+2b_{0,k}}{k^3\binom{2k}{k}}&\equiv\frac{3H_{p-1}(1)}{p}-
\sum_{k=1}^{p-1}\frac{H_k(1)}{k}-p\sum_{k=1}^{p-1}\frac{H_k(2)}{k}-p\sum_{k=1}^{p-1}\frac{H_k(1)}{k^2} \\
&-p^2\sum_{k=1}^{p-1}\frac{H_k(3)}{k}-p^2\sum_{k=1}^{p-1}\frac{H_k(2)}{k^2}-p^2\sum_{k=1}^{p-1}
\frac{H_k(2)H_k(1)}{k} \pmod{p^3}.
\end{split}
\end{equation*}
Applying (\ref{h121}) for evaluation of the last sum, by (a),  we get
\begin{equation*}
\begin{split}
p\sum_{k=1}^{p-1}\frac{3kb_{1,k}+2b_{0,k}}{k^3\binom{2k}{k}}&\equiv\frac{3H_{p-1}(1)}{p}-H_{p-1}(1,1)
-H_{p-1}(2)-pH_{p-1}(2,1)-pH_{p-1}(1,2) \\
&-2p^2H_{p-1}(3,1)-2p^2H_{p-1}(2,2)-p^2H_{p-1}(1,3) \\
&-p^2H_{p-1}(1,2,1)-p^2H_{p-1}(2,1,1)  \pmod{p^3}.
\end{split}
\end{equation*}
Now taking into account that
$$
H_{p-1}(1,1)=\frac{1}{2}\Bigl(H_{p-1}^2(1)-H_{p-1}(2)\Bigr)\equiv -\frac{H_{p-1}(2)}{2}
\equiv \frac{H_{p-1}(1)}{p}\pmod{p^3}
$$
by (a)--(f) and Lemma \ref{l3}, we  easily get the first congruence of the Proposition.

Similarly, setting $N=p-1$ in (\ref{eq34}) by  (\ref{23}),  Lemma \ref{l4} and (a),  we obtain
\begin{equation*}
\begin{split}
p\sum_{k=1}^{p-1}\frac{3kb_{2,k}+2b_{1,k}}{k^3\binom{2k}{k}}&\equiv
p\sum_{k=1}^{p-1}\frac{(-1)^{k-1}}{k^2}\Bigl(2H_k^2(1)-2H_k(1,1)+\frac{H_k(1)}{k}+\frac{1}{k^2}\Bigr) \\
&-\sum_{k=1}^{p-1}\Bigl(\frac{1}{k}+\frac{p}{k^2}\Bigr)\Bigl(H_{p-1-k}(1,1)-2H_{p-1}(1,1)\Bigr) \pmod{p^2}.
\end{split}
\end{equation*}
Applying (\ref{111}) and the formula
$$
H_k(1,1)=H_{k-1}(1,1)+\frac{H_{k-1}(1)}{k}
$$
after simplifying we find that
\begin{equation*}
\begin{split}
p\sum_{k=1}^{p-1}&\frac{3kb_{2,k}+2b_{1,k}}{k^3\binom{2k}{k}}\equiv -2pH_{p-1}(1,1,-2)-3pH_{p-1}(1,-3)-4pH_{p-1}(-4) \\
&-2pH_{p-1}(2,-2)
-\sum_{k=1}^{p-1}\frac{H_{p-1-k}(1,1)}{k}-p\sum_{k-1}^{p-1}\frac{H_{p-1-k}(1,1)}{k^2}
\pmod{p^2}.
\end{split}
\end{equation*}
Since from (c), (f) it follows that
\begin{equation*}
\begin{split}
\sum_{k=1}^{p-1}\frac{H_{p-1-k}(1,1)}{k}=\sum_{k=1}^{p-1}\frac{H_{k-1}(1,1)}{p-k}&\equiv
-\sum_{k=1}^{p-1}H_{k-1}(1,1)\Bigl(\frac{1}{k}+\frac{p}{k^2}\Bigr) \\
&=
-H_{p-1}(1,1,1)-pH_{p-1}(1,1,2)\equiv 0 \pmod{p^2}
\end{split}
\end{equation*}
and
\begin{equation*}
\sum_{k=1}^{p-1}\frac{H_{p-1-k}(1,1)}{k^2}=\sum_{k=1}^{p-1}\frac{H_{k-1}(1,1)}{(p-k)^2}
\equiv\sum_{k=1}^{p-1}\frac{H_{k-1}(1,1)}{k^2}=H_{p-1}(1,1,2)\equiv 0\pmod{p},
\end{equation*}
we conclude, by (g), the second congruence of the Proposition.

Finally, from (\ref{eq32}) with $m=3,$  $N=p-1,$ by  (\ref{23}),  Lemma \ref{l4} and (a), (b), we have
\begin{equation*}
p\!\sum_{k=1}^{p-1}\frac{3kb_{3,k}+2b_{2,k}}{k^3\binom{2k}{k}}\equiv\!-\!\sum_{k=1}^{p-1}
\frac{H_{p-1-k}(\{1\}^3)}{k}\equiv\!\sum_{k=1}^{p-1}\frac{H_{k-1}(\{1\}^3)}{k}=H_{p-1}(\{1\}^4)\equiv 0
\!\!\!\!\pmod{p},
\end{equation*}
which completes the proof.
\end{proof}

\vspace{0.3cm}

{\bf Proof of Theorem \ref{t5}.}

For any non-negative integers $n,k$ define the pair of functions
$$
F(n,k)=\frac{k!^4n!^2(2n+3k+3)}{2(2k+1)!(k+n+1)!^2}, \qquad
G(n,k)=\frac{k!^4n!^2}{(2k)!(k+n+1)!^2}.
$$
It is straightforward to check that $(F,G)$ is a WZ pair.
Applying summation formula (\ref{eq18}) and replacing $N$ by $N-1$ we get
$$
\sum_{k=1}^N\frac{1}{k^2}=\sum_{k=1}^N\frac{21k-8}{k^3\binom{2k}{k}^3}-
\sum_{k=1}^N\frac{k!(k-1)!^3N!^2(2N+3k)}{(2k)!(k+N)!^2}.
$$
Setting $N=p-1$ and noting that for $1\le k\le p-1,$
\begin{equation*}
\begin{split}
\frac{k!(k-1)!^3(p-1)!^2(2p+3k-2)}{(2k)!(k+p-1)!^2}&=
\frac{2p+3k-2}{k\binom{2k}{k}}\frac{1}{p^2}\prod_{j=1}^{k-1}\Bigl(1+\frac{p}{j}\Bigr)^{-2} \\
&\equiv\frac{2p+3k-2}{k\binom{2k}{k}}\sum_{m=0}^5b_{m,k-1}p^{m-2} \pmod{p^3}
\end{split}
\end{equation*}
we get
\begin{equation*}
\begin{split}
p^3\sum_{k=1}^{p-1}\frac{21k-8}{k^3\binom{2k}{k}^3}&\equiv p^3H_{p-1}(2)+p\sum_{k=1}^{p-1}
\frac{3k-2}{k\binom{2k}{k}} \\
&+\sum_{m=1}^5p^{m+1}\sum_{k=1}^{p-1}
\frac{(3k-2)b_{m,k-1}+2b_{m-1,k-1}}{k\binom{2k}{k}} \pmod{p^6}.
\end{split}
\end{equation*}
Now by Propositions \ref{p1}, \ref{p2} and Lemma \ref{l4}, for $p>5$ we obtain
\begin{equation*}
\begin{split}
&p^3\!\sum_{k=1}^{p-1}\frac{21k-8}{k^3\binom{2k}{k}^3}\equiv p^3H_{p-1}(2)+p-\frac{p}{\binom{2p-2}{p-1}}
-\sum_{m=1}^5\!\left(\frac{b_{m,p-1}}{\binom{2p-2}{p-1}}-\!\sum_{k=1}^{p-1}\frac{3kb_{m-2,k}+2b_{m-3,k}}{k^3\binom{2k}{k}}
\right)\!p^{m+1} \\
&\equiv p^3H_{p-1}(2)+p-\frac{p}{\binom{2p-2}{p-1}}
-\sum_{m=1}^5\frac{b_{m,p-1}p^{m+1}}{\binom{2p-2}{p-1}}+3\sum_{k=1}^{p-1}\frac{p^3}{k^2\binom{2k}{k}}
+4p^2H_{p-1}(1) \pmod{p^6}.
\end{split}
\end{equation*}
Employing (\ref{eq02.5}), (b) and Lemma \ref{l4} we get
\begin{equation}
p^3\sum_{k=1}^{p-1}\frac{21k-8}{k^3\binom{2k}{k}^3}\equiv p+3p^2H_{p-1}(1)
-\frac{p}{\binom{2p-2}{p-1}}-\sum_{m=1}^4\frac{b_{m,p-1}p^{m+1}}{\binom{2p-2}{p-1}} \pmod{p^6}.
\label{eq35}
\end{equation}
Taking into account that
\begin{equation*}
\begin{split}
\frac{p}{\binom{2p-2}{p-1}}&=(2p-1)\prod_{j=1}^{p-1}\Bigl(1+\frac{p}{j}\Bigr)^{-1}\equiv
2p-1+pH_{p-1}(1)+p^2H_{p-1}(1,1)-2p^2H_{p-1}(1) \\
&-2p^3H_{p-1}(1,1)+p^3H_{p-1}(1,1,1)+p^4H_{p-1}(1,1,1,1) \pmod{p^6}
\end{split}
\end{equation*}
and
$$
H_{p-1}(1,1)=\frac{1}{2}(H_{p-1}^2(1)-H_{p-1}(2))\equiv -\frac{1}{2}H_{p-1}(2)
\equiv\frac{H_{p-1}(1)}{p}-\frac{1}{5}p^3B_{p-5} \pmod{p^4},
$$
by (\ref{eq22}) and (c), we get
\begin{eqnarray*}
-\frac{p}{\binom{2p-2}{p-1}}&\equiv& 1-2p-2pH_{p-1}(1)+4p^2H_{p-1}(1)
+\frac{4}{5}p^5B_{p-5} \pmod{p^6},  \\[3pt]
-\frac{pb_{1,p-1}}{\binom{2p-2}{p-1}}&\equiv& 4pH_{p-1}(1)-2H_{p-1}(1) \pmod{p^5},  \\[3pt]
-\frac{pb_{2,p-1}}{\binom{2p-2}{p-1}}&\equiv& 4pH_{p-1}(1,1)-2H_{p-1}(1,1)\equiv
4H_{p-1}(1)-\frac{2H_{p-1}(1)}{p}+\frac{2}{5}p^3B_{p-5} \pmod{p^4}, \\[3pt]
-\frac{pb_{3,p-1}}{\binom{2p-2}{p-1}}&\equiv& -2H_{p-1}(1,1,1)\equiv \frac{4}{5}p^2B_{p-5} \pmod{p^3}, \\[3pt]
-\frac{p^2b_{4,p-1}}{\binom{2p-2}{p-1}}&\equiv& -2pH_{p-1}(1,1,1,1)\equiv \frac{2}{5}p^2B_{p-5} \pmod{p^3}.
\end{eqnarray*}
Now substituting the above congruences in (\ref{eq35}) and simplifying we get the required statement. \qed

\vspace{0.2cm}

{\bf\small Acknowledgement.}
This work was done during our summer visit in 2011 to the Abdus Salam International Centre for Theoretical Physics (ICTP),
Trieste, Italy. The authors wish to thank the staff and, in particular,  the Head of the Mathematics Section of the ICTP,
Professor Ramadas Ramakrishnan, for their hospitality and the excellent working conditions.

\end{document}